\renewcommand\eqref[1]{(\ref{#1})} 
\title[Hardy and Rellich inequalities with Bessel Pairs]{Hardy and Rellich inequalities with Bessel Pairs}
\author[Michael Ruzhansky]{Michael Ruzhansky}
\address{\href{www.ruzhansky.org}{Michael Ruzhansky:}
	\endgraf
	Department of Mathematics: Analysis, Logic and Discrete Mathematics
	\endgraf
	Ghent University, Belgium
	\endgraf
	and
	\endgraf
	School of Mathematical Sciences
	\endgraf Queen Mary University of London 
	\endgraf
	United Kingdom
	\endgraf
	{\it E-mail address} {\rm Michael.Ruzhansky@ugent.be}
}
\author[Bolys Sabitbek]{Bolys Sabitbek}
\address{ \href{http://analysis-pde.org/bolys-sabitbek/}{Bolys Sabitbek:}
	\endgraf
		School of Mathematical Sciences
	\endgraf Queen Mary University of London 
	\endgraf
	United Kingdom
	\endgraf 
		and
	\endgraf 
	Al-Farabi Kazakh National University 
	\endgraf 
	71 al-Farabi Ave., Almaty, 050040 
		\endgraf
		Kazakhstan 
	\endgraf
	{\it E-mail address} {\rm b.sabitbek@qmul.ac.uk}
}
\subjclass{35A23; 35R45; 35B09; 34A40}
\keywords{Hardy inequality; Rellich inequlity; Bessel pairs; stratified Lie group.}
\thanks{ The authors were supported by EPSRC grant EP/R003025/2. The first	author was also supported by FWO Odysseus 1 grant G.0H94.18N: Analysis and Partial Differential Equations. The second author was aslo supported by the MES RK grant AP08053051.}
\newtheoremstyle{theorem}
{10pt}          
{10pt}  
{\sl}  
{\parindent}     
{\bf}  
{. }    
{ }    
{}     
\theoremstyle{theorem}
\numberwithin{equation}{section}
\theoremstyle{plain}
\newtheorem{thm}{Theorem}[section]
\newtheorem{cor}[thm]{Corollary}
\newtheorem{lem}[thm]{Lemma}
\theoremstyle{definition}
\newtheorem{rem}[thm]{Remark}
\newtheoremstyle{defi}
{10pt}          
{10pt}  
{\rm}  
{\parindent}     
{\bf}  
{. }    
{ }    
{}     
\theoremstyle{defi}
\begin{document}
		\begin{abstract}
	In this paper, we establish suitable characterisations for a pair of functions $(W(x),H(x))$ on a bounded, connected domain $\Omega \subset \mathbb{R}^n$ in order to have the following Hardy inequality
	\begin{equation*}
	\int_{\Omega} W(x) |\nabla u|_A^2 dx \geq \int_{\Omega} |\nabla d|^2_AH(x)|u|^2 dx, \,\,\, u \in C^{1}_0(\Omega),
	\end{equation*} 
	where $d(x)$ is a suitable quasi-norm (gauge), $|\xi|^2_A = \langle A(x)\xi, \xi \rangle$ for $\xi \in \mathbb{R}^n$ and $A(x)$ is an $n\times n$ symmetric, uniformly positive definite matrix defined on a bounded domain $\Omega \subset \mathbb{R}^n$. We also give its $L^p$ analogue. As a consequence, we present examples for a standard Laplacian on $\mathbb{R}^n$, Baouendi-Grushin operator, and sub-Laplacians on the Heisenberg group, the Engel group and the Cartan group. Those kind of characterisations for a pair of functions $(W(x),H(x))$ are obtained also for the Rellich inequality. These results answer the open problems of Ghoussoub-Moradifam \cite{GM_book}.
	\end{abstract}
	\maketitle
\section{Introduction}	
In the work \cite{GM11}, Ghoussoub and Moradifam gave necessary and sufficient conditions for a Bessel pair of positive radial functions $W(x)$ and $H(x)$ on a ball $B$ of radius $R$ in $\mathbb{R}^n$, $n\geq 1$, so that one has the Hardy inequality for all functions $u \in C^{\infty}_0(B)$
\begin{equation*}
	\int_{B} W(x) |\nabla u|^2 dx \geq \int_{B} H(x) |u|^2 dx,
\end{equation*}
and the Hardy-Rellich inequality for all functions $u \in C^{\infty}_0(B)$:
\begin{equation*}
	\int_{B} W(x)|\Delta u|^2 dx \geq \int_{B} H(x)|\nabla u|^2 dx +(n-1)\int_{B} \left( \frac{W(x)}{|x|^2}-\frac{W_r(|x|)}{|x|} \right)|\nabla u|^2 dx. 
\end{equation*} 
The characterisation of pairs of functions $W(x)$ and $H(x)$ made a very interesting connection between Hardy type inequalities and the oscillatory behavior of ordinary differential equations. Choosing suitable Bessel pairs $(W(x),H(x))$ allows one to improve, extend, and unify many results about Hardy and Hardy-Rellich inequalities that were established by Caffarelli et al. \cite{CKN84}, Brezis and Vazquez \cite{BV97}, Wang and Willem \cite{WW03}, Adimurthi et al. \cite{ACR02} and other authors.   
  
In the book \cite{GM_book}, Ghoussoub and Moradifam posed two questions:
\begin{itemize} 
	\item  Develop suitable characterisations for a pair of functions $(W(x),H(x))$ in order to have the following inequality
	\begin{equation*}
	\int_{\Omega} W(x) |\nabla u|_A^2 dx \geq \int_{\Omega} H(x)|u|^2 dx, \,\,\, u \in C^{1}_0(\Omega),
	\end{equation*} 
	where $|\xi|^2_A = \langle A(x)\xi, \xi \rangle$ for $\xi \in \mathbb{R}^n$ and $A(x)$ is an $n\times n$ symmetric, uniformly positive definite matrix defined on a bounded domain $\Omega \subset \mathbb{R}^n$. 
	\item  Devise a necessary and sufficient condition on a Bessel pair $(W(x),H(x))$ in order for the Rellich inequality to hold: 
		\begin{equation*}
	\int_{\Omega} W(x) |\Delta u|^2 dx \geq \int_{\Omega} H(x)|u|^2 dx, \,\,\, u \in C^{\infty}_0(\Omega).
	\end{equation*}
\end{itemize}

The aim of this paper is to give suitable characterisations for a Bessel pair of positive radial functions $W(x)$ and $H(x)$ for Hardy and Rellich inequalities on a bounded, connected domain $\Omega \subset \mathbb{R}^n$ that answers the open problems of Ghoussoub-Moradifam \cite{GM_book}. We prove Hardy and Rellich inequalities expressing conditions for Bessel pairs in terms of ordinary differential equations associated with the positive weight functions $W(x)$ and $H(x)$. Our approach relies on the first and second order Picone identities. This suggested approach seems very effective, allowing us to recover almost all well-known Hardy and Rellich type inequalities. This approach is an extension of the method of Allegretto-Huang \cite[Theorem 2.1]{AH98}, by adding the positive weight function $W(x)$. A similar approach was used by the authors \cite{RSS_NoDEA} to establish Hardy and Rellich type inequalities for general (real-valued) vector fields with boundary terms. Recently, in \cite{Cazacu20} Cazacu called this method (but without the function $W(x)$) as the {\it Method of Super-solutions in Hardy and Rellich inequalities} that was adopted from Davies \cite{Davies99}.

This characterisation of Bessel pairs builds an interesting bridge between Hardy (Rellich) type inequalities and ordinary differential equations. In particular, we can extend and improve many results for Hardy and Rellich type inequalities. Let us briefly recall several types of Hardy inequalities that can be recovered:
\begin{itemize}
	\item[I.] The classical Hardy inequality for $n\geq 3$ on a bounded domain $\Omega \subset \mathbb{R}^n$ asserts that 
	\begin{equation*}
		\int_{\Omega} |\nabla u|^2 dx \geq \left(\frac{n-2}{2}\right)^2 \int_{\Omega} \frac{|u|^2}{|x|^2} dx,\,\,\, u \in C^1_0(\Omega),
	\end{equation*}
	where the constant is optimal and not attained. This version of Hardy inequality was investigated by many authors \cite{KO90}, \cite{Davies99}, \cite{RS_book} and the references therein.
	\item[II.] The geometric Hardy inequality for any bounded convex domain $\Omega \subset \mathbb{R}^n$ with smooth boundary asserts that
	\begin{equation*}
			\int_{\Omega} |\nabla u|^2 dx \geq \frac{1}{4} \int_{\Omega} \frac{|u|^2}{\delta^2(x)} dx,\,\,\, u \in C^1_0(\Omega),
	\end{equation*}
	where $\delta(x):= dist(x,\partial \Omega)$ is the Euclidean distance to boundary $\partial \Omega$ and the constant is also optimal and not attained. There is a number of studies related to this subject, see e.g. \cite{Ancona}, \cite{Avka_Lap}, \cite{Avk_Wirth}, \cite{Davies99}, \cite{KO90} and \cite{Mazya85}.
	\item[III.] The multipolar Hardy inequality on a bounded domain $\Omega \subset \mathbb{R}^n$ asserts that 
	\begin{equation*}
		\int_{\Omega} |\nabla u|^2 dx \geq C \sum_{i=1}^k\int_{\Omega} \frac{|u|^2}{|x-a_i|^2} dx,\,\,\, u \in C^1_0(\Omega),
	\end{equation*}
	where $k$ is the number of poles. This type of inequalities was studied by Felli-Terracini \cite{FT06}, Bosi-Dolbeault-Esteban \cite{BDE08} and Cacazu-Zuazua \cite{CZ13}.
\end{itemize}

\section{Hardy inequalities with Bessel pairs}\label{sec_Hardy}
Let $\Omega \subset \mathbb{R}^n$ be a bounded domain with smooth boundary. Define 
\begin{equation}\label{eq-L_Ap}
\mathcal{L}_{p,A} f = - \sum_{i,j=1}^n \frac{\partial}{\partial {x_j}} \left( a_{ij}(x)|\nabla f|^{p-2}_A \frac{\partial f}{\partial x_j}  \right),
\end{equation}
and 
\begin{equation*}
|\nabla f|^2_A = \sum_{i,j=1}^n a_{ij}(x) \frac{\partial f}{\partial x_i} \frac{\partial f}{\partial x_j},
\end{equation*}
where $A(x)$ is an $n\times n$ symmetric, uniformly positive definite matrix with smooth coefficients defined on $\Omega$. 

Let $\Phi_p$ be a constant multiple of the fundamental solution (e.g. \cite{BG89} and \cite{KM92}) for $\mathcal{L}_{p,A}$ that solves the equation 
\begin{align*}
\mathcal{L}_{p,A} \Phi_p(x) &= 0, \,\,\, x \neq 0.
\end{align*}
From $\Phi_p$, we are able to define the quasi-norm
\begin{equation}\label{quasi-norm}
d(x) := \left\{\begin{matrix}
\Phi_p(x)^{\frac{p-1}{p-Q}}, & \text{for} \,\,\, x \neq 0,\\ 
0,& \text{for} \,\,\, x = 0,
\end{matrix}\right.
\end{equation}
where $Q$ is the appropriate homogeneous dimension and $1<p<Q$. 

Define 
\begin{equation}
\Psi_{\mathcal{L}_A}(x) := |\nabla d|^2_A(x),
\end{equation}
for $x \neq 0$. The function $\Psi_{\mathcal{L}_A}(x)$ can be calculated for the explicit form of the quasi-norm $d(x)$. For example:  
\begin{itemize}
	\item In the Euclidean setting, when $\mathcal{L}_A= \Delta$ is the standard Laplace operator, then $\Psi_{\Delta}(x) =1$.
	\item In the Heisenberg group, when $\mathcal{L}_A= \mathcal{L}_{\mathbb{H}}$ is the sub-Laplacian and the quasi-norm ($\mathcal{L}$-gauge) $d_{\mathbb{H}}(x)$, then $\Psi_{\mathcal{L}_{\mathbb{H}}}(x) = |x'|^2d_{\mathbb{H}}^{-2}$.
	\item For Baouendi-Grushin operator, when $\mathcal{L}_A= \mathcal{L}_{\gamma}$ is the Baouendi-Grushin operator and $d_{\gamma}(x)$ is associated the quasi-norm, then $\Psi_{\mathcal{L}_{\gamma}}(x) =|\xi|^{2\gamma}d_{\gamma}^{-2\gamma}$ where $x = (\xi,\zeta) \in \mathbb{R}^k\times \mathbb{R}^l$ and $\gamma>0$.
\end{itemize}
In the stratified Lie groups, we shall remark that the function $\Psi_{\mathcal{L}_A}(x)$ is $\delta_{\lambda}$-homogeneous degree of zero and translation invariant (i.e. $\Psi_{\mathcal{L}}(\alpha \circ x, \alpha \circ y)= \Psi_{\mathcal{L}}(x,y)$ for $x,y \in \mathbb{G}$ with $x\neq y$). Furthermore, the function $\Psi_{\mathcal{L}_A}(x)$ is the kernel of mean volume formulas (see more \cite[Definition 5.5.1]{BLU07}).  

Let us convert the equation
\begin{equation}\label{eq-PDE}
\sum_{i,j=1}^n \frac{\partial }{\partial x_j} \left( W(|x|) |\nabla v|^{p-2}_A a_{ij}(x) \frac{\partial v}{\partial x_i} \right) + |\nabla d|^p_AH(|x|) v^{p-1} =0,
\end{equation}
into the quasilinear second-order differential equation of the form
\begin{equation}\label{eq-ODE}
( r^{Q-1} W(r) (v'(r))^{p-1})' + r^{Q-1}H(r) v^{p-1}(r) =0,
\end{equation}
where $' = \partial_{r}$ and $r:=d(x)$.

Suppose that $W(x)$, $H(x)$, and $v(x)$ are positive radially symmetric functions. 
Let us rewrite the first term of \eqref{eq-PDE} in terms of the radial derivative. First note that for $i,j=1,\ldots,n$, we have
\begin{align}
\frac{\partial r}{\partial x_i} &= \left(\frac{p-1}{p-Q}\right) \Phi_p^{\frac{p-1}{p-Q}-1} \frac{\partial \Phi_p}{\partial x_i},\label{2.5}\\
\frac{\partial r}{\partial x_j}	\frac{\partial r}{\partial x_i} &= \left(\frac{p-1}{p-Q}\right)^2 \Phi_p^{2\frac{p-1}{p-Q}-2} \frac{\partial \Phi_p}{\partial x_i}\frac{\partial \Phi_p}{\partial x_j},\\
\frac{\partial^2 r}{\partial x_i \partial x_j} &= \left(\frac{p-1}{p-Q}\right) \Phi_p^{\frac{p-1}{p-Q}-1} \frac{\partial^2 \Phi_p}{\partial x_i\partial x_j} + \frac{(p-1)(Q-1)}{(p-Q)^2} \Phi_p^{\frac{p-1}{p-Q}-2} \frac{\partial \Phi_p}{\partial x_i}\frac{\partial \Phi_p}{\partial x_j}\\
&=  \left(\frac{p-1}{p-Q}\right) \Phi_p^{\frac{p-1}{p-Q}-1} \frac{\partial^2 \Phi_p}{\partial x_i\partial x_j} + \left(\frac{Q-1}{p-1}\right)\Phi_p^{-\frac{p-1}{p-Q}}\frac{\partial r}{\partial x_j}	\frac{\partial r}{\partial x_i}. \nonumber
\end{align} 
Then
\begin{align*}
\frac{\partial v}{\partial x_i} = v' \frac{\partial r}{\partial x_i}, \,\,\, \text{and} \,\,\, \frac{\partial^2 v}{\partial x_i \partial x_j} = \frac{\partial r}{\partial x_i}\frac{\partial r}{\partial x_j} v'' + \frac{\partial^2 r }{\partial x_i \partial x_j} v'.
\end{align*}
Since $\Phi_p = r^{\frac{p-Q}{p-1}}$, we thus have
\begin{align}
|\nabla r|^{p-2}_A &=\left( \frac{p-1}{p-Q}\right)^{p-2} |\nabla \Phi_p|_A^{p-2} r^{\frac{(Q-1)(p-2)}{p-1}},\\ 
|\nabla v|^{p-2}_A &=|\nabla r|^{p-2}_A ( v')^{p-2}, \\
\frac{\partial |\nabla v|^{p-2}_A}{\partial x_j}& = \frac{(Q-1)(p-2)}{(p-1)r}|\nabla r|^{p-2}_A (v')^{p-2} \frac{\partial r}{\partial x_j} + (p-2)|\nabla r|^{p-2}_A (v')^{p-3} v'' \frac{\partial r}{\partial x_j} \\
& +\left( \frac{p-1}{p-Q}\right)^{p-2} \frac{\partial |\nabla \Phi_p|_A^{p-2}}{\partial x_j}r^{\frac{(Q-1)(p-2)}{p-1}}(v')^{p-2}.\nonumber
\end{align}
Using above expressions, a straightforward computation gives 
\begin{align*}
\small
&\frac{\partial}{\partial {x_j}} \left( W a_{ij}(x)|\nabla v|^{p-2}_A \frac{\partial v}{\partial x_i}  \right) =   W a_{ij}(x)|\nabla v|^{p-2}_A \frac{\partial^2 v}{\partial x_i\partial x_j} \\
&  + a_{ij}(x) |\nabla v|^{p-2}_A \frac{\partial W}{\partial x_j}\frac{\partial v}{\partial x_i} + W  |\nabla v|^{p-2}_A \frac{\partial a_{ij}(x)}{\partial x_j}\frac{\partial v}{\partial x_i} + a_{ij}(x)W\frac{\partial |\nabla v|^{p-2}_A }{\partial x_j} \frac{\partial v}{\partial x_i} 
\end{align*}
\begin{align*}
	&=   W |\nabla r|^{p-2}_A ( v')^{p-2} v''\underbrace{a_{ij}(x) \frac{\partial r}{\partial x_i}\frac{\partial r}{\partial x_j}}_{=|\nabla r|^2_A} +  \left(\frac{p-1}{p-Q}\right) W |\nabla r|^{p-2}_A ( v')^{p-1} \Phi_p^{\frac{p-1}{p-Q}-1}a_{ij}(x) \frac{\partial^2 \Phi_p}{\partial x_i\partial x_j} \\
	&+ \left(\frac{Q-1}{p-1}\right) W |\nabla r|^{p-2}_A ( v')^{p-2}\Phi_p^{-\frac{p-1}{p-Q}}\underbrace{a_{ij}(x) \frac{\partial r}{\partial x_i}\frac{\partial r}{\partial x_j}}_{=|\nabla r|^2_A} +|\nabla r|^{p-2}_A ( v')^{p-1}W_r \underbrace{a_{ij}(x) \frac{\partial r}{\partial x_i}\frac{\partial r}{\partial x_j}}_{=|\nabla r|^2_A} \\ 
	&  + W |\nabla r|^{p-2}_A ( v')^{p-2} \frac{\partial a_{ij}(x)}{\partial x_j} v'\frac{\partial r}{\partial x_i} + \left( \frac{p-1}{p-Q}\right)^{p-2} W r^{\frac{(Q-1)(p-2)}{p-1}}(v')^{p-1}a_{ij}(x)  \frac{\partial |\nabla \Phi_p|_A^{p-2}}{\partial x_j}\frac{\partial r}{\partial x_i}\\
	& + \frac{(Q-1)(p-2)}{(p-1)r}|\nabla r|^{p-2}_A Ц(v')^{p-1} \underbrace{a_{ij}(x) \frac{\partial r}{\partial x_i}\frac{\partial r}{\partial x_j}}_{=|\nabla r|^2_A} + (p-2)W|\nabla r|^{p-2}_A (v')^{p-2} v'' \underbrace{a_{ij}(x) \frac{\partial r}{\partial x_i}\frac{\partial r}{\partial x_j}}_{=|\nabla r|^2_A}
\end{align*}
where $|\nabla r|^2_A = \sum_{i,j=1}^n \partial_{x_i}r \partial_{x_j}r$. We assume there is the summation $\sum_{i,j=1}^n$, to get
\begin{align*}
	&=  W |\nabla r|^{p}_A ( v')^{p-2} \left((p-1) v'' + \left(\frac{Q-1}{r}\right)\frac{1}{p-1} v' + \left(\frac{p-1}{p-Q}\right)\frac{\Phi_p^{\frac{p-1}{p-Q}-1}}{|\nabla r|_A^2} a_{ij}(x) \frac{\partial^2 \Phi_p}{\partial x_i \partial x_j} v'\right)\\
	& + |\nabla r|_A^p (v')^{p-1}W_r + W|\nabla r|_A^{p-2}(v')^{p-1} \frac{\partial a_{ij}}{\partial x_j}\frac{\partial r}{\partial x_i} + \left(\frac{Q-1}{r}\right)\left(1 - \frac{1}{p-1}\right)W |\nabla r|_A^p (v')^{p-1}\\
	&  + \underbrace{\left( \frac{p-1}{p-Q}\right)^{p-2}  r^{\frac{(Q-1)(p-2)}{p-1}} |\nabla \Phi_p|^{p-2}_A}_{=|\nabla r|_A^{p-2}} \frac{W(v')^{p-1}a_{ij}(x)}{ |\nabla \Phi_p|^{p-2}_A}  \frac{\partial |\nabla \Phi_p|_A^{p-2}}{\partial x_j}\frac{\partial r}{\partial x_i}.
\end{align*}
Now we apply equation \eqref{2.5} for $\partial r  / \partial x_i$
\begin{align*}
&=W |\nabla r|_A^p (v')^{p-2}\left( (p-1) v'' +  \left[\frac{Q-1}{r} +  \frac{W_r}{W}\right] v'\right) +  \left(\frac{p-1}{p-Q} \right)W |\nabla r|_A^{p-2} (v')^{p-1}\Phi_p^{\frac{p-1}{p-Q}-1}\\
& \times \frac{1}{|\nabla \Phi_p|^{p-2}_A}\underbrace{\sum_{i,j=1}^n\left[|\nabla \Phi_p|^{p-2}_Aa_{ij}(x)\frac{\partial^2 \Phi_p}{\partial x_i\partial x_j} + a_{ij}(x)\frac{\partial |\nabla \Phi_p|_A^{p-2}}{\partial x_j}\frac{\partial \Phi_p}{\partial x_i} + |\nabla \Phi_p|^{p-2}_A\frac{\partial a_{ij}}{\partial x_j}\frac{\partial \Phi_p}{\partial x_i}\right]}_{\mathcal{L}_{p,A}\Phi_p(x)=0} \\
& = W (r)|\nabla r|_A^p (v')^{p-2}\left( (p-1) v'' +  \left[\frac{Q-1}{r} +  \frac{W_r}{W}\right] v'\right).
\end{align*}
We arrive that \eqref{eq-PDE} can be rewritten as
\begin{equation}
	 W (r)|\nabla r|_A^p (v')^{p-2}\left( (p-1) v'' +  \left[\frac{Q-1}{r} +  \frac{W_r}{W}\right] v'\right) + |\nabla r|_A^pH(r) v^{p-1} =0,
\end{equation}
which means $(r^{Q-1} W(r) (v'(r))^{p-1})' + r^{Q-1}H(r) v^{p-1}(r) =0,$ which is \eqref{eq-ODE}.

The next theorem provides an explicit existence criterion of positive solution for ordinary differential equation \eqref{eq-ODE} which is proved by Agarwal-Bohner-Li \cite[Theorem 4.6.13]{ABL_book}:
\begin{thm}[Agarwal-Bohner-Li \cite{ABL_book}]\label{ABL-thm}
	Let $a:[r_0, \infty) \rightarrow (0,\infty)$ and $b:[r_0, \infty) \rightarrow (0,\infty)$ be  continuous functions with $b(r) \neq 0$. 
	Suppose that 
	\begin{equation*}
	\int_{r_0}^{\infty} b(s) ds< \infty, \,\, \text{and} \,\, \,\,	\phi(r) = 2 \int_{r}^{\infty} b(s)ds < \infty \,\,\, \text{for} \,\,\, r\geq r_0.
	\end{equation*}
	Suppose further that 
	\begin{equation}
	\int_{r_0}^{\infty} \left( \frac{\phi(s)}{a(s)}\right)^{\frac{1}{p-1}} ds  \leq \frac{1}{2(p-1)}.
	\end{equation}
	Then there exists a nonnegative solution to the following equation
	\begin{equation}
	(a(r) [y'(r)]^{p-1})' + b(r)[y(r)]^{p-1} =0 \,\,\, \text{for} \,\,\, r\geq r_0.
	\end{equation}
\end{thm}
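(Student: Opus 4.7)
The plan is to reformulate the boundary-value problem at infinity as a fixed-point equation for an integral operator on a closed convex bounded subset of $C([r_0,\infty))$, and then invoke Schauder's fixed-point theorem. The precise numerical condition $\int_{r_0}^\infty(\phi/a)^{1/(p-1)}\,ds\leq 1/(2(p-1))$ is designed so that the natural integral operator becomes self-mapping on the set $\{0\leq y\leq 1\}$.

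First I would look for a nonnegative solution normalized by $y(\infty)=1$ and $a(r)[y'(r)]^{p-1}\to 0$ as $r\to\infty$. Integrating $(a[y']^{p-1})'=-b\,y^{p-1}$ once from $r$ to $\infty$ yields $a(r)[y'(r)]^{p-1}=\int_r^\infty b(t)\,y^{p-1}(t)\,dt$, and a second integration from $r$ to $\infty$ gives the integral equation
$$
y(r)\;=\;1-\int_r^\infty\!\left(\frac{1}{a(s)}\int_s^\infty b(t)\,y^{p-1}(t)\,dt\right)^{\!1/(p-1)}\!ds\;=:\;T[y](r).
$$
On the closed convex set $K=\{y\in C([r_0,\infty)):0\leq y\leq 1\}$, the upper bound $T[y]\leq 1$ is immediate from nonnegativity of the integrand, while $y^{p-1}\leq 1$ together with $\int_s^\infty b=\phi(s)/2$ and the hypothesis yield
$$
0\;\leq\;1-T[y](r)\;\leq\;\left(\tfrac12\right)^{1/(p-1)}\!\int_{r_0}^\infty\!\left(\frac{\phi(s)}{a(s)}\right)^{\!1/(p-1)}\!ds\;\leq\;\frac{(1/2)^{1/(p-1)}}{2(p-1)}\;\leq\;1,
$$
so $T(K)\subset K$.

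The next step is continuity and compactness of $T$. Continuity on $K$ follows by dominated convergence, using the uniform majorant $(\phi(s)/a(s))^{1/(p-1)}$ for the outer integrand. Relative compactness of $T(K)$ in $C([r_0,\infty))$ with uniform convergence follows from equicontinuity on compact sub-intervals (the outer integrand is uniformly $L^1_{\mathrm{loc}}$-controlled) together with equiconvergence to $1$ at infinity, since $\int_r^\infty(\phi/a)^{1/(p-1)}\,ds\to 0$ independently of $y\in K$. Schauder's fixed-point theorem then produces $y\in K$ with $y=T[y]$, and two differentiations of this identity show that $y$ is a classical nonnegative solution of $(a[y']^{p-1})'+b\,y^{p-1}=0$.

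The main obstacle I anticipate is the tight bookkeeping with the constant $1/(2(p-1))$: verifying that $(1/2)^{1/(p-1)}/(2(p-1))$ stays at most $1$ over the whole range $p>1$ (which it does, since the exponential factor $(1/2)^{1/(p-1)}$ decays faster than $1/(2(p-1))$ grows as $p\downarrow 1$, and collapses the opposite way as $p\to\infty$), and ensuring that the recovered $y'$ is nonnegative so that the quasilinear expression $[y']^{p-1}$ is unambiguous at the final differentiation step. A secondary technical point is the well-definedness of the inner integral uniformly in $y\in K$, which is immediate from $y\leq 1$ and the assumption $\int_{r_0}^\infty b<\infty$.
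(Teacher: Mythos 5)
The paper does not actually prove this statement: Theorem \ref{ABL-thm} is imported verbatim from Agarwal--Bohner--Li \cite[Theorem 4.6.13]{ABL_book} and used as a black box, so there is no in-paper argument to compare yours against. Judged on its own, your proof is essentially correct and is in fact the standard route to such nonoscillation/existence results: integrate the equation twice from infinity with the normalisation $y(\infty)=1$, $a[y']^{p-1}\to 0$, obtain the operator $T[y](r)=1-\int_r^\infty\bigl(a(s)^{-1}\int_s^\infty b\,y^{p-1}\bigr)^{1/(p-1)}ds$, and apply Schauder on $K=\{0\le y\le 1\}$. Your bookkeeping checks out: the self-mapping bound reduces to $2^{-1/(p-1)}\le 2(p-1)$, which holds for every $p>1$ (the function $q\mapsto\ln(2q)+q^{-1}\ln 2$ is minimised at $q=\ln 2$ with positive value), and in fact yields the stronger conclusion that the fixed point is bounded below by a positive constant, hence is a genuinely positive, nondecreasing solution --- which is what the paper actually needs later, since Theorem \ref{main_thm} divides by $v^{p-1}$ (the literal statement ``nonnegative solution'' is trivially satisfied by $y\equiv 0$, so your construction delivers the intended, nontrivial content). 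Two small points deserve one more line each in a polished write-up: (i) the sign discussion --- the first integration gives $a(r)[y'(r)]^{p-1}=\int_r^\infty b\,y^{p-1}\ge 0$, so $y'\ge 0$ and the power $[y']^{p-1}$ is unambiguous, as you note; (ii) for the compactness step it is slightly cleaner to work in the Fr\'echet space of continuous functions with the topology of uniform convergence on compacta and invoke Schauder--Tychonoff (the classical setting for these theorems), though your sup-norm argument with uniform tail control via $\int_r^\infty(\phi/a)^{1/(p-1)}ds\to 0$ is also valid.
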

The following theorem characterises the relation between $W(x)$ and $H(x)$ in order to obtain the weighted Hardy inequality:
\begin{thm}\label{main_thm}
	Let $\Omega$ be a bounded domain in $\mathbb{R}^n$. Let $W(x)$ and $H(x)$ be positive radially symmetric functions. Let $1<p<Q$. Let $d(x)$ be as in \eqref{quasi-norm}. Then the inequality 
	\begin{equation}
	\int_{\Omega} W(x) |\nabla u|^p_A dx \geq \int_{\Omega}|\nabla d|^p_A H(x) |u|^p dx
	\end{equation}
	holds for all complex-valued functions $u \in C^1_0(\Omega)$ provided that the following conditions hold: 
	\begin{equation}\label{2.9}
	\int_{r_0}^{\infty} s^{Q-1}H(s) ds< \infty, \,\, \text{and} \,\, \,\,	\phi(r) = 2 \int_{r}^{\infty} s^{Q-1}H(s) ds < \infty \,\,\, \text{for} \,\,\, r\geq r_0,
	\end{equation}
	\begin{equation}\label{2.10}
	\int_{r_0}^{\infty} \left( \frac{\phi(s)}{s^{Q-1}W(s)}\right)^{\frac{1}{p-1}} ds  \leq \frac{1}{2(p-1)} \,\,\, \text{for some} \,\,\, r_0>0.
	\end{equation}
\end{thm}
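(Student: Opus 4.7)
The plan is to follow the super-solution (Picone/Allegretto--Huang) method sketched in the introduction, adapted here to accommodate the weight $W(x)$, the matrix $A(x)$, and the quasi-norm $d(x)$. First I would invoke Theorem~\ref{ABL-thm} with $a(r)=r^{Q-1}W(r)$ and $b(r)=r^{Q-1}H(r)$; the integrability assumptions \eqref{2.9}--\eqref{2.10} are exactly its hypotheses, so one obtains a positive function $v\colon[r_0,\infty)\to(0,\infty)$ solving
\begin{equation*}
\bigl(r^{Q-1}W(r)(v'(r))^{p-1}\bigr)' + r^{Q-1}H(r)v^{p-1}(r)=0.
\end{equation*}
Setting $V(x):=v(d(x))$ and invoking the ODE-to-PDE conversion already carried out between \eqref{eq-PDE} and \eqref{eq-ODE} in the excerpt (which crucially uses $\mathcal{L}_{p,A}\Phi_p=0$), the function $V$ is a positive solution of
\begin{equation*}
\sum_{i,j=1}^{n}\frac{\partial}{\partial x_j}\!\Bigl(W|\nabla V|_A^{p-2}\,a_{ij}\frac{\partial V}{\partial x_i}\Bigr) + |\nabla d|_A^{p}\,H\,V^{p-1}=0
\end{equation*}
on $\Omega\setminus\{d=0\}$, i.e.\ $V$ is the required positive super-solution of the weighted $(p,A)$-Laplace equation.

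Next I would apply the matrix/weight-adapted first-order Picone inequality: for every $u\in C^1_0(\Omega)$ and any positive $V$,
\begin{equation*}
W\bigl|\nabla|u|\bigr|_A^{p}\;\geq\; W|\nabla V|_A^{p-2}\bigl\langle A\nabla V,\;\nabla\!\bigl(|u|^p/V^{p-1}\bigr)\bigr\rangle.
\end{equation*}
This is the Allegretto--Huang identity \cite[Thm.~2.1]{AH98} read in the Riemannian metric induced by $A$; since $A$ is symmetric and positive definite, the Euclidean proof carries over verbatim from the convexity of $t\mapsto|t|^p$. Integrating over $\Omega$, using the pointwise bound $\bigl|\nabla|u|\bigr|_A\leq|\nabla u|_A$ to dispatch the complex-valued case, integrating by parts (the compact support of $u$ kills all boundary contributions), and substituting the PDE from the first step yields
\begin{equation*}
\int_\Omega W|\nabla u|_A^{p}\,dx\;\geq\;\int_\Omega \frac{|u|^p}{V^{p-1}}\,|\nabla d|_A^{p}\,H\,V^{p-1}\,dx \;=\;\int_\Omega |\nabla d|_A^{p}\,H(x)\,|u|^p\,dx,
\end{equation*}
which is the claimed inequality.

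The main obstacle I foresee is the singularity of $V$ at the pole $\{d=0\}$: the integration by parts must first be performed on $\Omega\setminus\{d\leq\varepsilon\}$ and then one must pass to the limit $\varepsilon\to 0^+$, showing that the flux integral over the level set $\{d=\varepsilon\}$ vanishes. This requires quantitative control of $v(r)$ and $(v'(r))^{p-1}$ as $r\to 0^+$, which should be extractable from the ODE itself together with the known asymptotics of the fundamental solution $\Phi_p$. A secondary, more cosmetic point is that Theorem~\ref{ABL-thm} produces its solution on the half-line $[r_0,\infty)$, whereas $d$ takes values in a bounded interval on $\Omega$; this can be remedied by a change of variable $r\mapsto R/r$ that swaps the two regimes, but the exact translation between the two conditions on $W$ and $H$ needs to be recorded carefully.
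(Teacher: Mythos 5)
Your proposal follows the paper's proof essentially verbatim: Theorem \ref{ABL-thm} applied with $a(r)=r^{Q-1}W(r)$ and $b(r)=r^{Q-1}H(r)$ produces the positive solution of \eqref{eq-ODE}, the ODE-to-PDE conversion transfers it to a solution of \eqref{eq-PDE1}, and the weighted Picone identity of Lemma \ref{lem_Picone} combined with the divergence theorem gives the inequality. The two technical points you flag --- the limiting argument near the pole $\{d=0\}$ and the mismatch between the half-line $[r_0,\infty)$ in Theorem \ref{ABL-thm} and the bounded range of $d$ on $\Omega$ --- are genuine, but the paper's own proof does not address them either, so your argument is no less complete than the original.
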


\begin{rem}	
	Note that
	\begin{itemize}
		\item As usual, we denote $W(x)=W(|x|)$ and $H(x)=H(|x|)$. We fix the notation for a positive function $f(x) > 0$ and a non-negative function $f(x)\geq 0$.
		\item For $p=2$, Theorem \ref{main_thm} answers to the question posed by Ghoussoub-Moradifam \cite{GM_book}.
		\item For $A(x)\equiv 1$, Theorem \ref{main_thm} was established for general (real-valued) vector fields with boundary terms by the authors \cite{RSS_NoDEA} (see also \cite{RS_book}, \cite{RV19} and \cite{Sabitbek_thesis}). 
	\end{itemize}
\end{rem}

To prove Theorem \ref{main_thm}, we need to establish the (first-order) Picone identity with $A(x)$ which is an $n\times n$ symmetric, uniformly positive definite matrix defined on $\Omega$. The proof of Lemma \ref{lem_Picone} is similar to the standard Picone identity obtained by Allegretto-Huang \cite{AH98} and the authors \cite{RSS_NoDEA}.
\begin{lem}\label{lem_Picone}
	Let $\Omega$ be a bounded domain in $\mathbb{R}^n$.
	Let a complex-valued function $u$ be differentiable a.e. in $\Omega$. Let $1<p<\infty$. Let a positive function $v$ be differentiable in $\Omega$. Define 
	\begin{align}
	R(u,v) &= |\nabla u|^p_A - \langle A(x)\nabla\left(\frac{|u|^p}{v^{p-1}} \right),|\nabla v|^{p-2}_A\nabla v\rangle, \\
	L(u,v) &= |\nabla u|^p_A -p\frac{|u|^{p-1} }{v^{p-1}}|\nabla v|^{p-2}_A \langle A(x)\nabla|u|, \nabla v \rangle + (p-1)\frac{|u|^p}{v^p}|\nabla v|^p_A,
	\end{align} 
	where $|\xi|^2_A= \langle A(x)\xi,\xi\rangle$. Then 
	\begin{equation*}
	L(u,v) =R(u,v)\geq 0.
	\end{equation*}
	Moreover, $L(u,v)=0$ a.e. in $\Omega$ if and only if $u\geq 0$ and $u=cv$ a.e. in $\Omega$ for some constant $c$ in each component of $\Omega$.	
\end{lem}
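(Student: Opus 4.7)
My plan is to follow the classical Allegretto--Huang strategy, adapted to the $A$-weighted inner product $\langle A(x)\cdot,\cdot\rangle$. The argument splits into three stages: the algebraic identity $L(u,v)=R(u,v)$, a pointwise bound $L(u,v)\geq 0$ obtained by combining a weighted Cauchy--Schwarz estimate with Young's inequality, and the analysis of the equality case.

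For the identity, I would work on the open set $\{|u|>0\}$ and apply the chain rule to obtain
\[
\nabla\!\left(\frac{|u|^p}{v^{p-1}}\right) = \frac{p|u|^{p-1}}{v^{p-1}}\,\nabla|u| - (p-1)\frac{|u|^p}{v^p}\,\nabla v.
\]
Substituting this into $R(u,v)$ and using the symmetry of $A(x)$ together with the relation $\langle A(x)\nabla v,\nabla v\rangle=|\nabla v|_A^2$, the two resulting terms reproduce exactly the cross term and the last term of $L(u,v)$, which establishes $R(u,v)=L(u,v)$.

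For the nonnegativity, I would first apply Cauchy--Schwarz in the $A$-inner product to estimate the cross term as $\langle A(x)\nabla|u|,\nabla v\rangle \leq |\nabla|u||_A\,|\nabla v|_A$. Since $u$ is complex-valued, I would additionally invoke the $A$-weighted pointwise bound $|\nabla|u||_A\leq|\nabla u|_A$, which follows from the representation $\nabla|u|=\mathrm{Re}(\bar u\,\nabla u)/|u|$ combined with the symmetry and positive definiteness of $A(x)$. Then Young's inequality with conjugate exponents $p$ and $p/(p-1)$ applied to $p|\nabla u|_A\cdot(|u|/v)^{p-1}|\nabla v|_A^{p-1}$ produces the upper bound $|\nabla u|_A^p+(p-1)(|u|^p/v^p)|\nabla v|_A^p$, and inserting this into $L(u,v)$ yields $L(u,v)\geq 0$.

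For the equality characterisation I would track the equality conditions in each ingredient used above: equality in weighted Cauchy--Schwarz forces $A(x)^{1/2}\nabla|u|$ and $A(x)^{1/2}\nabla v$ to be positively proportional; equality in $|\nabla|u||_A=|\nabla u|_A$ forces the complex phase of $u$ to be locally constant, so $u$ is a nonnegative real multiple of a smooth function; and equality in Young pins down the proportionality constant so that $|\nabla u|_A=(|u|/v)|\nabla v|_A$. Combining these on each connected component of $\{|u|>0\}$ produces $u=cv$ with $c\geq 0$ constant, and the reverse implication is immediate from the computations in the first two stages. The main obstacle I anticipate is the careful treatment of the null set $\{u=0\}$, where $|u|$ fails to be differentiable; I would handle this by first carrying out the argument on $\{|u|>\varepsilon\}$ and then passing to the limit $\varepsilon\to 0$, which is exactly why the identity and the equality characterisation are asserted only almost everywhere.
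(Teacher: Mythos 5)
Your proposal is correct and follows essentially the same route as the paper's proof: the chain-rule expansion giving $R(u,v)=L(u,v)$, the pointwise bound $|\nabla|u||_A\leq|\nabla u|_A$ for complex-valued $u$, the weighted Cauchy--Schwarz estimate $\langle A(x)\nabla|u|,\nabla v\rangle\leq|\nabla|u||_A|\nabla v|_A$, Young's inequality with exponents $p$ and $p/(p-1)$, and the same tracking of equality cases. The only cosmetic difference is the order in which the three inequalities are applied, which does not change the argument.
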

\begin{proof}[Proof of Lemma \ref{lem_Picone}]
	It is easy to show that $R(u,v)=L(u,v)$ by the expansion of $R(u,v)$ as follows
	\begin{align*}
	R(u,v) &= |\nabla u|^p_A - \langle A(x)\nabla\left(\frac{|u|^p}{v^{p-1}} \right),|\nabla v|^{p-2}_A\nabla v \rangle \\
	& = |\nabla u|^p_A -p\frac{|u|^{p-1} }{v^{p-1}}|\nabla v|^{p-2}_A \langle A(x) \nabla|u|, \nabla v\rangle + (p-1)\frac{|u|^p}{v^p}|\nabla v|^p_A\\
	& =  L(u,v).
	\end{align*}
	Let $u(x)=R(x)+iI(x)$, where $R(x)$ and $I(x)$ are the real and imaginary parts of $u$. We can restrict to the set where $u(x)\neq 0$. Then we have
	\begin{align}
	(\nabla |u|)(x) = \frac{1}{|u|} (R(x)\nabla R(x)+ I(x) \nabla I(x)).
	\end{align}
	Since
	\begin{align*}
	\left| \frac{1}{|u|} (R\nabla R+ I \nabla I) \right|^2_A \leq |\nabla R|^2_A + |\nabla I|^2_A,
	\end{align*}
	we get $|\nabla |u||_A\leq |\nabla u|_A$ a.e. in $\Omega$ (see \cite[Theorem 2.1]{RSS_revista}).

	Let us recall Young's inequality where for real numbers $a$ and $b$ we have
	\begin{equation*}
	p ab \leq a^p + (p-1)b^{\frac{p}{p-1}}.
	\end{equation*}
	By taking $a=|\nabla u|_A$ and $b=\frac{|u|^{p-1}}{v^{p-1}}|\nabla v|_A^{p-1}$, we prove $L(u,v)\geq 0$ in the following way:
	\begin{align*}
	L(u,v)& = |\nabla u|^p_A -p\frac{|u|^{p-1} }{v^{p-1}}|\nabla v|^{p-2}_A \langle A(x) \nabla|u|,\nabla v\rangle + (p-1)\frac{|u|^p}{v^p}|\nabla v|^p_A\\
	& =  |\nabla u|^p_A -p\frac{|u|^{p-1}}{v^{p-1}}|\nabla |u||_A|\nabla v|^{p-1}_A + (p-1)\frac{|u|^p}{v^p}|\nabla v|^p_A \\
	& + p\frac{|u|^{p-1}|\nabla v|^{p-2}_A}{v^{p-1}} \left( |\nabla |u||_A|\nabla v|_A -\langle A(x)\nabla |u|, \nabla v\rangle \right)  \\
	& \geq |\nabla u|^p_A -p\frac{|u|^{p-1}}{v^{p-1}}|\nabla u|_A|\nabla v|^{p-1}_A + (p-1)\frac{|u|^p}{v^p}|\nabla v|^p_A \\
	& + p\frac{|u|^{p-1}|\nabla v|^{p-2}_A}{v^{p-1}} \left( |\nabla |u||_A|\nabla v|_A - \langle A(x)\nabla |u|, \nabla v\rangle \right)\\
	& \geq p\frac{|u|^{p-1}|\nabla v|^{p-2}_A}{v^{p-1}} \left( |\nabla |u||_A|\nabla v|_A - \langle A(x)\nabla |u|, \nabla v\rangle \right).
	\end{align*}
	We will now show that $|\nabla |u||_A|\nabla v|_A \geq\langle A(x)\nabla |u|, \nabla v\rangle $, which implies $L(u,v)\geq 0$. A direct computation gives 
	\begin{align*}
	0 \leq | \nabla |u| - b\nabla v |^2_A &= \langle A(x) (\nabla |u| - b\nabla v), \nabla |u| - b\nabla v \rangle \\
	& =|\nabla |u||^2_A -2b\langle A(x) \nabla |u|,\nabla v\rangle + b^2 |\nabla v|^2_A.
	\end{align*}
	Setting $b=|\nabla v|_A^{-2} \langle A(x)\nabla |u|,\nabla v\rangle$ and rearranging produces 
	\begin{equation}\label{eq_2.11}
		|\nabla |u||_A|\nabla v|_A \geq\langle  A(x)\nabla |u|, \nabla v\rangle.
	\end{equation}
	Observe that $L(u,v)=0$ if and only if
	\begin{itemize}
		\item equality holds for $|\nabla |u||_A \leq |\nabla u|_A$ when $u\geq 0$;
		\item equality holds in \eqref{eq_2.11} when $u=cv$ for some constant $c$. 
	\end{itemize}
The proof is complete.
\end{proof}

\begin{proof}[Proof of Theorem \ref{main_thm}]
	By Theorem \ref{ABL-thm}, the conditions \eqref{2.9} and \eqref{2.10} provide the existence of a nonnegative solution to the following equation 
	\begin{equation}\label{eq-PDE1}
	\nabla \cdot (W(x) |\nabla v|^{p-2}_A A(x)\nabla v)+ |\nabla d|^p_A(x)H(x) v^{p-1} = 0.
	\end{equation}
	Then we prove by applying properties of the (first-order) Picone identity, divergence theorem and the equation \eqref{eq-PDE1}, respectively. We have  
	\begin{align*}
			0&\leq \int_{\Omega} W(x) R(u,v) dx\\
			&= \int_{\Omega} W(x)|\nabla u|^p_A dx - \int_{\Omega} W(x)  \langle A(x)\nabla\left(\frac{|u|^p}{v^{p-1}} \right), |\nabla v|^{p-2}_A\nabla v \rangle dx \\
			& = \int_{\Omega} W(x)|\nabla u|^p_A dx+ \int_{\Omega} \frac{|u|^p}{v^{p-1}} \nabla \cdot (W(x) |\nabla v|^{p-2}_A A(x)\nabla v)dx \\
			& =   \int_{\Omega} W(x)|\nabla u|^p_A - \int_{\Omega} |\nabla d|^p_AH(x) |u|^p dx.
	\end{align*}
This proves Theorem \ref{main_thm}.
\end{proof}

Next we will give examples for operators $\mathcal{L}_A$ by taking different matrices $A(x)$.

\bigskip

{\bf Euclidean Space $\mathbb{R}^n$:} Let $\Omega  = \mathbb{R}^n$. If we take $A(x)$ as an identity matrix, then $\mathcal{L}_A = - \Delta$ is the standard Laplacian, $\Phi(x)= |x|^{2-n}$ and $d(x) = |x|$ with $x \in \mathbb{R}^n$.
\begin{cor}
	Let $\Omega = \mathbb{R}^n$. Let $W(|x|)$ and $H(|x|)$ be positive  radially symmetric functions. Then the inequality 
	\begin{equation}
	\int_{\mathbb{R}^n} W(|x|) |\nabla u|^2 dx \geq \int_{\mathbb{R}^n} H(|x|) |u|^2 dx
	\end{equation}
	holds for all complex-valued functions $u \in C^1_0(\Omega)$ provided that the following conditions hold: 
	\begin{equation}
	\int_{r_0}^{\infty} s^{n-1}H(s) ds< \infty, \,\, \text{and} \,\, \,\,	\phi(r) = 2 \int_{r}^{\infty} s^{n-1}H(s) ds < \infty \,\,\, \text{for} \,\,\, r\geq r_0,
	\end{equation}
	\begin{equation}
	\int_{r_0}^{\infty}  \frac{\phi(s)}{s^{n-1}W(s)} ds  \leq \frac{1}{2} \,\,\, \text{for some} \,\,\, r_0>0.
	\end{equation}
\end{cor}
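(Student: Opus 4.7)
The plan is to obtain this corollary as a direct specialization of Theorem \ref{main_thm} to the standard Euclidean Laplacian. I would take $A(x)$ equal to the $n\times n$ identity matrix, so that $|\nabla u|_A^2 = |\nabla u|^2$ and the operator $\mathcal{L}_{2,A}$ coincides with $-\Delta$, and fix $p = 2$ with homogeneous dimension $Q = n$ (so the requirement $1<p<Q$ forces $n\ge 3$). Since a constant multiple of $|x|^{2-n}$ is the fundamental solution of $-\Delta$ on $\mathbb{R}^n$, we have $\Phi_2(x) = |x|^{2-n}$ and the quasi-norm in \eqref{quasi-norm} reduces to
\begin{equation*}
d(x) = \Phi_2(x)^{(p-1)/(p-Q)} = \bigl(|x|^{2-n}\bigr)^{1/(2-n)} = |x|.
\end{equation*}
In particular $|\nabla d|_A^2 = |\nabla |x||^2 = 1$, so the factor $|\nabla d|_A^p$ in Theorem \ref{main_thm} disappears from the right-hand side and the desired inequality has the form claimed.

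Next I would verify that the hypotheses match under this specialization. Condition \eqref{2.9} with $Q = n$ is literally the first condition of the corollary. Condition \eqref{2.10} with $p = 2$ gives $1/(p-1) = 1$ and $1/(2(p-1)) = 1/2$, and so reduces to
\begin{equation*}
\int_{r_0}^{\infty} \frac{\phi(s)}{s^{n-1} W(s)}\, ds \leq \frac{1}{2},
\end{equation*}
which is again exactly what is assumed in the corollary.

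A mild subtlety is that Theorem \ref{main_thm} is stated on a bounded domain whereas the corollary is posed on all of $\mathbb{R}^n$; since every $u \in C^1_0(\mathbb{R}^n)$ has compact support, I would simply apply the theorem on any bounded smooth $\Omega' \supset \operatorname{supp}(u)$ and then extend the inequality by zero outside $\Omega'$, which is legitimate because the resulting constant is independent of $\Omega'$. Beyond this I do not expect any genuine obstacle: the argument is an unpacking of the definitions, relying only on the classical identification of $|x|^{2-n}$ as (a constant multiple of) the fundamental solution of $-\Delta$ and on the fact that $|x|$ has unit Euclidean gradient length.
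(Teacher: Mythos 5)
Your proposal is correct and matches the paper's intent exactly: the paper derives this corollary by the same specialization ($A(x)=I$, $p=2$, $Q=n$, $\Phi_2(x)=|x|^{2-n}$, $d(x)=|x|$, hence $|\nabla d|_A^2=1$), with the conditions of Theorem \ref{main_thm} reducing verbatim to those stated. Your additional remarks on the implicit restriction $n\ge 3$ and on passing from a bounded domain to $\mathbb{R}^n$ via the compact support of $u$ are sound and, if anything, more careful than the paper's own presentation.
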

\medskip
{\bf The Heisenberg group $\mathbb{H}^1$:}  Let $\mathbb{H}^1:= \mathbb{R}^2 \times \mathbb{R}$ be Heisenberg group with $x = (x_1,x_2,x_3)$. We take  
$$A(x):=\begin{pmatrix}
1 & 0 & -\frac{x_2}{2}\\ 0 & 1 & \frac{x_1}{2}\\
-\frac{x_2}{2} & \frac{x_1}{2} & \frac{x_1^2 + x_2^2}{4}
\end{pmatrix}.$$
Then we have the following horizontal gradient 
$$\nabla_{\mathbb{H}}:= (\partial_{x_1} - \frac{x_2}{2} \partial_{x_3}, \partial_{x_2} + \frac{x_1}{2} \partial_{x_3}),$$ and the sub-Laplacian is given by
\begin{equation*}
\mathcal{L}_{\mathbb{H}} := \Delta_{x_1,x_2} + \frac{x_1^2 + x_2^2}{4} \partial_{x_3}^2 + (x_1\partial_{x_2} - x_2 \partial_{x_1})\partial_{x_3}.
\end{equation*}
The quasi-norm ($\mathcal{L}$-gauge) is given by 
\begin{equation*}
d_{\mathbb{H}}(x) = ( (x_1^2+x_2^2)^2 + 16 x_3^2 )^{\frac{1}{4}}. 
\end{equation*}
Note that the function $\Psi_{\mathcal{L}_{\mathbb{H}}}(x)$ could be explicitly calculated as follow
\begin{align*}
\Psi_{\mathcal{L}_{\mathbb{H}}}(x) = |\nabla_{\mathbb{H}} d_{\mathbb{H}}|^2(x) &= (X_1d_{\mathbb{H}})^2+ (X_2d_{\mathbb{H}})^2 \\
& =   d_{\mathbb{H}}^{-6} [(x_1^2+x_2^2)^2 x_1^2 - 8 (x_1^2+x_2^2)x_1x_2x_3 + 16x_1^2x_3^2] \\
& + d_{\mathbb{H}}^{-6} [(x_1^2+x_2^2)^2 x_2^2 + 8 (x_1^2+x_2^2)x_1x_2x_3 + 16x_2^2x_3^2] \\
& = d_{\mathbb{H}}^{-6}(x_1^2+x_2^2)[(x_1^2+x_2^2)^2 + 16x_3^2]\\
& = |x'|^2 d_{\mathbb{H}}^{-2}(x),
\end{align*}
where $|x'|^2 = x_1^2+ x_2^2$.
\begin{cor}\label{cor_H}
	Let $\Omega$ be a bounded domain in $\mathbb{H}^1$. Let $W(x)$ and $H(x)$ be positive radially symmetric functions. Then the inequality 
	\begin{equation}
	\int_{\Omega} W(x) |\nabla_{\mathbb{H}} u|^2 dx \geq \int_{\Omega} |\nabla_{\mathbb{H}} d_{\mathbb{H}}|^2 H(x) |u|^2 dx
	\end{equation}
	holds for all complex-valued functions $u \in C^1_0(\Omega)$ provided that the following conditions hold: 
	\begin{equation}
	\int_{r_0}^{\infty} s^{Q-1}H(s) ds< \infty, \,\, \text{and} \,\, \,\,	\phi(r) = 2 \int_{r}^{\infty} s^{Q-1}H(s) ds < \infty \,\,\, \text{for} \,\,\, r\geq r_0,
	\end{equation}
	\begin{equation}
	\int_{r_0}^{\infty}  \frac{\phi(s)}{s^{Q-1}W(s)} ds  \leq \frac{1}{2} \,\,\, \text{for some} \,\,\, r_0>0.
	\end{equation}
\end{cor}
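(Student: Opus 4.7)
The plan is to view Corollary \ref{cor_H} as a direct specialisation of Theorem \ref{main_thm} to the Heisenberg setting with $p = 2$, homogeneous dimension $Q = 4$, the matrix $A(x)$ displayed above, and the Kaplan gauge $d_{\mathbb{H}}$. First I would verify the algebraic identification $|\nabla u|_A^2 = |\nabla_{\mathbb{H}} u|^2$ by expanding $\langle A(x)\nabla u, \nabla u\rangle$ with the given $A(x)$; a short calculation shows it equals $(X_1 u)^2 + (X_2 u)^2$ where $X_1 = \partial_{x_1} - \tfrac{x_2}{2}\partial_{x_3}$ and $X_2 = \partial_{x_2} + \tfrac{x_1}{2}\partial_{x_3}$. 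Consequently the divergence-form operator $-\sum_{i,j}\partial_{x_j}(a_{ij}\partial_{x_i})$ coincides with $\mathcal{L}_{\mathbb{H}}$.

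Next I would identify the remaining data needed to invoke Theorem \ref{main_thm}. The Kaplan gauge $d_{\mathbb{H}}$ is (up to a multiplicative constant) the fundamental-solution exponent of $\mathcal{L}_{\mathbb{H}}$ in the sense of \eqref{quasi-norm}: with $Q=4$ and $p=2$ one has $\Phi_2 = d_{\mathbb{H}}^{2-Q}$ and $\mathcal{L}_{\mathbb{H}}\Phi_2 = 0$ on $\mathbb{H}^1\setminus\{0\}$ by the classical Folland formula. The value $\Psi_{\mathcal{L}_{\mathbb{H}}}(x) = |\nabla_{\mathbb{H}} d_{\mathbb{H}}|^2 = |x'|^2 d_{\mathbb{H}}^{-2}$ has already been computed in the discussion preceding the statement, so the weight $|\nabla d|_A^2\,H(x)$ on the right-hand side of Theorem \ref{main_thm} becomes precisely $|\nabla_{\mathbb{H}} d_{\mathbb{H}}|^2 H(x)$. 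Since $p=2$, the exponent $\tfrac{1}{p-1}$ collapses to $1$, and conditions \eqref{2.9}-\eqref{2.10} reduce to the conditions displayed in the corollary.

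The main obstacle is that the matrix $A(x)$ on $\mathbb{H}^1$ is not uniformly positive definite; it has rank $2$, with a zero eigenvalue in the $\partial_{x_3}$ direction, so Theorem \ref{main_thm} as literally stated does not immediately apply. However, inspection of its proof shows that positivity of $A(x)$ enters only through the Cauchy-Schwarz-type bound \eqref{eq_2.11} inside Lemma \ref{lem_Picone}, namely $|\nabla|u||_A\,|\nabla v|_A \geq \langle A(x)\nabla|u|,\nabla v\rangle$, which is valid for any positive semi-definite symmetric $A(x)$. The radial reduction from \eqref{eq-PDE} to \eqref{eq-ODE} uses only the defining property $\mathcal{L}_{p,A}\Phi_p = 0$ off the origin, which holds here by the choice of gauge. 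Therefore the whole scheme — Picone identity plus the positive supersolution supplied by Theorem \ref{ABL-thm} — transfers verbatim to the degenerate Heisenberg setting, and the Hardy inequality of Corollary \ref{cor_H} follows by repeating the integration-by-parts argument used in the proof of Theorem \ref{main_thm} on $\Omega\subset\mathbb{H}^1$.
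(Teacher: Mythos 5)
Your proposal is correct and follows essentially the same route as the paper, which presents Corollary \ref{cor_H} as a direct specialisation of Theorem \ref{main_thm} to $p=2$, $Q=4$, the Kaplan gauge $d_{\mathbb{H}}$ and the displayed matrix $A(x)$, with no separate proof given. Your additional observation that this $A(x)$ has rank $2$ and is therefore not uniformly positive definite --- so that Theorem \ref{main_thm} does not literally apply --- is a genuine point the paper glosses over, and your check that the Cauchy--Schwarz step \eqref{eq_2.11}, the bound $|\nabla|u||_A\le|\nabla u|_A$, and the radial reduction to \eqref{eq-ODE} all survive for positive semi-definite $A(x)$ is exactly what is needed to close that gap.
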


\medskip	
{\bf Baouendi-Grushin operator:} 	Let $\Omega$ be an open subset of $\mathbb{R}^n=\mathbb{R}^k\times \mathbb{R}^l$ and $x\in \Omega$ with $x = (\xi,\zeta)$. For $\gamma>0$, we take  
	$$A(x):=\begin{pmatrix}
	I_k & 0\\ 0 &\gamma|\xi|^{\gamma}I_l
	\end{pmatrix},$$
	where $I_k$ and $I_l$ are the identity matrices of size $k$ and $l$, respectively. Then we have the following vector field $\nabla_{\gamma}:= (\nabla_{\xi}, \gamma|\xi|^{\gamma}\nabla_{\zeta})$ and the Baouendi-Grushin operator 
	$$\mathcal{L}_{\gamma}:= -\Delta_{\xi}-\gamma^2|\xi|^{2\gamma}\Delta_{\zeta}.$$
	For $x = (\xi,\zeta) \in \mathbb{R}^k\times \mathbb{R}^l$, let 
	\begin{equation*}
	d_{\gamma}(x) = (|\xi|^{2\gamma} + |\zeta|^2)^{1/2\gamma}.
	\end{equation*}
	
	As in the Heisenberg group, the function $\Psi_{\mathcal{L}_{\gamma}}(x)$ could be explicitly calculated as follow
\begin{align*}
\Psi_{\mathcal{L}_{\gamma}}(x) = |\nabla_{\gamma} d_{\gamma}|^2(x) &= \sum_{i=1}^k (\partial_{\xi_i} d_{\gamma})^2 + \sum_{i=1}^l \gamma^2|\xi|^{2\gamma}(\partial_{\zeta_i} d_{\gamma})^2 \\
& = (|\xi|^{2\gamma} + |\zeta|^2)^{\frac{1}{\gamma}-2} (|\xi|^{4\gamma} + |\xi|^{2\gamma}|\zeta|^2)\\
& = \frac{|\xi|^{2\gamma}}{d_{\gamma}^{2\gamma}(x)}.
\end{align*}	
\begin{cor}\label{cor_BG}
Let $\Omega$ be an open subset of $\mathbb{R}^n=\mathbb{R}^k\times \mathbb{R}^l$ and $x\in \Omega$ with $x = (\xi,\zeta)$. Let $W(x)$ and $H(x)$ be positive radially symmetric functions. Then the inequality 
	\begin{equation}
	\int_{\Omega} W(x) |\nabla_{\gamma} u|^2 dx \geq \int_{\Omega} |\nabla_{\gamma} d_{\gamma}|^{2} H(x) |u|^2 dx
	\end{equation}
	holds for all complex-valued functions $u \in C^1_0(\Omega)$ provided that the following conditions hold: 
	\begin{equation}
	\int_{r_0}^{\infty} s^{Q-1}H(s)ds< \infty, \,\, \text{and} \,\, \,\,	\phi(r) = 2 \int_{r}^{\infty} s^{Q-1}H(s) ds < \infty \,\,\, \text{for} \,\,\, r\geq r_0,
	\end{equation}
	\begin{equation}
	\int_{r_0}^{\infty} \frac{\phi(s)}{s^{Q-1}W(s)} ds  \leq \frac{1}{2}\,\,\, \text{for some} \,\,\, r_0>0.
	\end{equation}
\end{cor}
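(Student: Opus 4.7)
The plan is to derive Corollary~\ref{cor_BG} by specialising Theorem~\ref{main_thm} to the Baouendi--Grushin setting with $p=2$.

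First I would identify the data. Taking
$$A(x)=\begin{pmatrix} I_k & 0 \\ 0 & \gamma^{2}|\xi|^{2\gamma}I_l \end{pmatrix},$$
a direct computation gives $|\nabla u|_A^{2}=|\nabla_{\gamma} u|^{2}$ and $\mathcal{L}_{2,A}=\mathcal{L}_{\gamma}$. The appropriate homogeneous dimension associated with the anisotropic dilation $(\xi,\zeta)\mapsto(\lambda\xi,\lambda^{1+\gamma}\zeta)$ is $Q=k+(1+\gamma)l$. A short calculation (differentiating twice in $\xi$ and in $\zeta$) verifies that $\Phi_{2}(x):=d_{\gamma}(x)^{2-Q}$ satisfies $\mathcal{L}_{\gamma}\Phi_{2}=0$ away from the singular locus; it therefore plays, up to a multiplicative constant, the role of the fundamental solution required by (\ref{quasi-norm}). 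The prescription (\ref{quasi-norm}) then reproduces exactly the gauge $d_{\gamma}(x)=(|\xi|^{2\gamma}+|\zeta|^{2})^{1/(2\gamma)}$ used in the statement of the corollary.

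Next I would invoke the identity $\Psi_{\mathcal{L}_{\gamma}}(x)=|\nabla_{\gamma} d_{\gamma}|^{2}=|\xi|^{2\gamma}d_{\gamma}^{-2\gamma}$ already computed in the paragraph preceding the corollary. Under these identifications, the hypotheses (\ref{2.9}) and (\ref{2.10}) of Theorem~\ref{main_thm} at $p=2$ reduce word-for-word to the two integrability conditions assumed in the corollary, noting that $\tfrac{1}{2(p-1)}=\tfrac{1}{2}$ when $p=2$. Theorem~\ref{main_thm} therefore delivers the claimed inequality
$$\int_{\Omega} W(x)|\nabla_{\gamma} u|^{2}\,dx\;\geq\;\int_{\Omega}|\nabla_{\gamma} d_{\gamma}|^{2} H(x)|u|^{2}\,dx.$$

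The main subtlety is that the Grushin matrix $A(x)$ degenerates on the characteristic set $\{\xi=0\}$, so the uniform positive definiteness assumed in Theorem~\ref{main_thm} is not literally satisfied whenever $\Omega$ meets this set. I would handle this by observing that the proof of Theorem~\ref{main_thm} only relies on (i) the algebraic Picone identity of Lemma~\ref{lem_Picone}, which is valid for any symmetric positive semidefinite matrix, and (ii) the existence of a radial supersolution $v(d_{\gamma}(x))$ provided by Theorem~\ref{ABL-thm}, whose construction rests entirely on the scalar ODE (\ref{eq-ODE}) and so is unaffected by the degeneracy of $A$. If a strictly elliptic setting is needed at an intermediate step, one may regularise by $A_\varepsilon(x)=A(x)+\varepsilon I$, apply Theorem~\ref{main_thm} to $A_\varepsilon$, and then let $\varepsilon\to 0^{+}$, passing to the limit in both integrals by dominated convergence against a fixed test function $u\in C^{1}_{0}(\Omega)$. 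Justifying this limit is the only non-mechanical part of the argument.
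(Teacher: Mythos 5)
Your proposal is correct and follows exactly the route the paper intends: the corollary is stated as an immediate specialisation of Theorem~\ref{main_thm} with $p=2$, $A(x)$ the Grushin matrix, $Q=k+(1+\gamma)l$, and $\Psi_{\mathcal{L}_\gamma}=|\xi|^{2\gamma}d_\gamma^{-2\gamma}$, and the paper supplies no further argument. Your closing observation that $A(x)$ degenerates on $\{\xi=0\}$, so the uniform positive definiteness hypothesis of Theorem~\ref{main_thm} fails literally, is a real issue the paper silently ignores, and your proposed remedies (noting the Picone identity only needs positive semidefiniteness, or regularising by $A+\varepsilon I$) are more careful than the source.
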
	

\medskip
{\bf The Engel group $\mathbb{E}$:} Let $\mathbb{E}:= \mathbb{R}^2 \times \mathbb{R}\times \mathbb{R}$ be the Engel group with $x = (x_1,x_2,x_3,x_4)$. We take  
	$$A(x):=\begin{pmatrix}
	1 & 0 & -\frac{x_2}{2}& - \frac{x_3}{2} + \frac{x_1x_2}{12}\\ 0 & 1 & \frac{x_1}{2}& \frac{x_1^2}{12}\\
	-\frac{x_2}{2} & \frac{x_1}{2} & \frac{x_1^2+x_2^2}{4} &  \frac{x_2}{2}\left( \frac{x_3}{2} -\frac{x_1x_2}{12} \right) + \frac{x_1^3}{24} \\
	- \frac{x_3}{2} + \frac{x_1x_2}{12}&\frac{x_1^2}{12}& \frac{x_2}{2}\left( \frac{x_3}{2} -\frac{x_1x_2}{12} \right) + \frac{x_1^3}{24} & \left( \frac{x_3}{2} -\frac{x_1x_2}{12} \right)^2 + \frac{x_1^4}{144}
	\end{pmatrix}.$$
	Then the horizontal gradient and sub-Laplacian are given by 
	\begin{equation*}
	\nabla_{\mathbb{E}} : = (X_1,X_2),
	\end{equation*}  
	and
	\begin{align*}
	\mathcal{L}_{\mathbb{E}}:=   X_1^2 + X_2^2,
	\end{align*}
	where
	\begin{equation*}
	X_1:= \partial_{x_1} - \frac{x_2}{2} \partial_{x_3} - \left(\frac{x_3}{2}-\frac{x_1x_2}{12}\right)\partial_{x_4}, \,\, \text{and} \,\, X_2:=\partial_{x_2} + \frac{x_1}{2} \partial_{x_3} + \frac{x_1^2}{12} \partial_{x_4}.
	\end{equation*}

\begin{cor}
	Let $\Omega$ be a bounded domain in $\mathbb{E}$. Let $W(x)$ and $H(x)$ be positive radially symmetric functions. Then the inequality 
	\begin{equation}
	\int_{\Omega} W(x) |\nabla_{\mathbb{E}} u|^2 dx \geq \int_{\Omega} |\nabla_{\mathbb{E}} d|^2 H(x) |u|^2 dx
	\end{equation}
	holds for all complex-valued functions $u \in C^1_0(\Omega)$ provided that the following conditions hold: 
	\begin{equation}
	\int_{r_0}^{\infty} s^{Q-1}H(s) ds< \infty, \,\, \text{and} \,\, \,\,	\phi(r) = 2 \int_{r}^{\infty} s^{Q-1}H(s) ds < \infty \,\,\, \text{for} \,\,\, r\geq r_0,
	\end{equation}
	\begin{equation}
	\int_{r_0}^{\infty}  \frac{\phi(s)}{s^{Q-1}W(s)} ds  \leq \frac{1}{2} \,\,\, \text{for some} \,\,\, r_0>0.
	\end{equation}
\end{cor}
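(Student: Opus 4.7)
The plan is to derive this corollary as a direct specialization of Theorem \ref{main_thm} with $p=2$, so the task reduces to matching the Engel-group data to the abstract framework of Section \ref{sec_Hardy}.

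First, I would verify that the $4\times 4$ matrix $A(x)$ prescribed in the statement realizes the sub-Laplacian in the sense of \eqref{eq-L_Ap}. Writing $B(x)$ for the $2\times 4$ matrix whose two rows are the coefficient vectors of $X_1$ and $X_2$ in the standard basis $\partial_{x_1},\ldots,\partial_{x_4}$, one has $A(x) = B(x)^\top B(x)$. Consequently $\langle A(x)\xi,\xi\rangle = |B(x)\xi|^2$, from which both $|\nabla u|^2_A = (X_1 u)^2 + (X_2 u)^2 = |\nabla_{\mathbb{E}} u|^2$ and $\mathcal{L}_{2,A} = -(X_1^2 + X_2^2) = -\mathcal{L}_{\mathbb{E}}$ follow by a direct expansion.

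Second, I would invoke the existence of a homogeneous fundamental solution $\Phi$ for the Engel sub-Laplacian, available in the stratified-group setting (see \cite{BG89,BLU07}), and set $d(x) = \Phi(x)^{1/(2-Q)}$ as in \eqref{quasi-norm}, where $Q = 7$ is the homogeneous dimension of $\mathbb{E}$ (whose stratification has layer dimensions $2+1+1$). By construction $d$ is a quasi-norm homogeneous of degree $1$ under the anisotropic dilations of $\mathbb{E}$, and $\Psi_{\mathcal{L}_A}(x) = |\nabla d|^2_A = |\nabla_{\mathbb{E}} d|^2$ supplies the weight appearing on the right-hand side. Finally, I would apply Theorem \ref{main_thm}: the integral hypotheses \eqref{2.9} and \eqref{2.10} specialize to those stated in the corollary (noting $\frac{1}{2(p-1)} = \frac{1}{2}$ for $p=2$), and the desired Hardy inequality follows at once.

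The main subtle point is that the matrix $A(x)$ here has rank $2$ in $\mathbb{R}^4$, so is only positive semi-definite rather than uniformly positive definite as written in the abstract hypothesis of Theorem \ref{main_thm}. Inspection of the proof, however, shows that Lemma \ref{lem_Picone} and the ODE reduction leading to \eqref{eq-ODE} use only symmetry and positive semi-definiteness of $A$, together with the identity $\mathcal{L}_{p,A}\Phi_p = 0$ away from the origin. Hence the framework carries over intact, exactly as it did for the Heisenberg and Baouendi-Grushin corollaries presented above, and no genuinely new obstacle arises in the Engel case.
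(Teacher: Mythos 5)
Your proposal is correct and follows essentially the same route as the paper: the corollary is stated there as a direct specialization of Theorem \ref{main_thm} with $p=2$, the Engel matrix $A(x)=B(x)^{\top}B(x)$, the homogeneous fundamental solution giving the gauge $d$, and $Q=7$. Your additional observation that this $A(x)$ is only positive semi-definite (rank $2$) while the theorem is stated for uniformly positive definite matrices is a genuine and worthwhile point that the paper passes over silently, and your check that Lemma \ref{lem_Picone} and the ODE reduction survive under mere semi-definiteness is exactly what is needed to justify the specialization.
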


\medskip
{\bf The Cartan group $\mathcal{B}_5$:} Let $\mathcal{B}_5:= \mathbb{R}^2 \times \mathbb{R}\times \mathbb{R}^2$ be the Cartan group with $x = (x_1,x_2,x_3,x_4,x_5)$. We take  
	$$A(x):=\begin{pmatrix}
1 & 0 & 0& 0& 0\\ 
0 & 1 & -x_1& \frac{x_1^2}{2}& x_1x_2\\
0 & -x_1& x_1^2 & - \frac{x_1^3}{2} & -x_1^2x_2\\
0 & \frac{x_1^2}{2} & - \frac{x_1^3}{2} & \frac{x_1^4}{4} & \frac{x_1^3x_2}{2} \\
0 & x_1x_2 &-x_1^2x_2 &  \frac{x_1^3x_2}{2}& x_1^2x_2^2
\end{pmatrix}.$$

Then the horizontal gradient and sub-Laplacian are given by 
\begin{equation*}
\nabla_{\mathcal{B}_5} : = (X_1,X_2),
\end{equation*}  
and
\begin{align*}
\mathcal{L}_{\mathcal{B}_5}:=   X_1^2 + X_2^2,
\end{align*}
where
\begin{equation*}
X_1:= \partial_{x_1}  \,\, \text{and} \,\, X_2:=\partial_{x_2} - x_1  \partial_{x_3} + \frac{x_1^2}{2} \partial_{x_4} + x_1x_2\partial x_5.
\end{equation*}

\begin{cor}
	Let $\Omega$ be a bounded domain in $\mathcal{B}_5$. Let $W(x)$ and $H(x)$ be positive radially symmetric functions. Then the inequality 
	\begin{equation}
	\int_{\Omega} W(x) |\nabla_{\mathcal{B}_5} u|^2 dx \geq \int_{\Omega} |\nabla_{\mathcal{B}_5}d|^2H(x) |u|^2 dx
	\end{equation}
	holds for all complex-valued functions $u \in C^1_0(\Omega)$ provided that the following conditions hold: 
	\begin{equation}
	\int_{r_0}^{\infty} s^{Q-1}H(s) ds< \infty, \,\, \text{and} \,\, \,\,	\phi(r) = 2 \int_{r}^{\infty} s^{Q-1}H(s) ds < \infty \,\,\, \text{for} \,\,\, r\geq r_0,
	\end{equation}
	\begin{equation}
	\int_{r_0}^{\infty}  \frac{\phi(s)}{s^{Q-1}W(s)} ds  \leq \frac{1}{2} \,\,\, \text{for some} \,\,\, r_0>0.
	\end{equation}
\end{cor}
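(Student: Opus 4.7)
The plan is to derive this corollary as a direct specialization of Theorem \ref{main_thm} to the Cartan group setting with $p=2$, following the same pattern already used for the Heisenberg, Baouendi--Grushin, and Engel corollaries. The essential point is to check that the matrix $A(x)$ displayed above is precisely the one whose associated operator $\mathcal{L}_A$ agrees with the sub-Laplacian $\mathcal{L}_{\mathcal{B}_5}=X_1^2+X_2^2$, so that the abstract inequality in Theorem \ref{main_thm} specialises to the claimed inequality on $\mathcal{B}_5$.

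The first step is to observe that, writing $\sigma(x)$ for the $5\times 2$ matrix whose columns are the coordinate vectors of $X_1$ and $X_2$, one has $A(x)=\sigma(x)\sigma(x)^{T}$. A direct check against the displayed entries confirms this. Consequently, for any smooth $u$,
\begin{equation*}
|\nabla u|_{A}^{2}=\langle A(x)\nabla u,\nabla u\rangle =|\sigma(x)^{T}\nabla u|^{2}=(X_{1}u)^{2}+(X_{2}u)^{2}=|\nabla_{\mathcal{B}_5}u|^{2},
\end{equation*}
so the divergence form operator $\mathcal{L}_{2,A}$ of \eqref{eq-L_Ap} reduces to $-\mathcal{L}_{\mathcal{B}_5}$. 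In particular $A(x)$ is symmetric, positive semi-definite, and degenerate exactly along the characteristic set of the Hörmander system, which is compatible with the running hypothesis since the argument in Theorem \ref{main_thm} only uses the pointwise identity $|\nabla u|_A^{2}=\langle A\nabla u,\nabla u\rangle$ together with the Picone identity of Lemma \ref{lem_Picone}.

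Next, I identify the gauge: the Cartan group $\mathcal{B}_5$ is a stratified Lie group of homogeneous dimension $Q=10$ (strata of dimensions $2,1,2$ with weights $1,2,3$). The fundamental solution $\Phi$ of $\mathcal{L}_{\mathcal{B}_5}$ at the origin is $\delta_{\lambda}$-homogeneous of degree $2-Q=-8$, so $d(x):=\Phi(x)^{1/(2-Q)}$ as in \eqref{quasi-norm} is a well-defined quasi-norm, and $\Psi_{\mathcal{L}_{\mathcal{B}_5}}(x)=|\nabla_{\mathcal{B}_5}d|^{2}$ by the very definition of $\Psi$ from Section \ref{sec_Hardy}. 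With these identifications, the hypotheses \eqref{2.9}--\eqref{2.10} of Theorem \ref{main_thm} (with $p=2$) coincide word for word with the hypotheses of the corollary, and the conclusion of Theorem \ref{main_thm} is exactly the claimed inequality
\begin{equation*}
\int_{\Omega}W(x)|\nabla_{\mathcal{B}_5}u|^{2}\,dx\geq \int_{\Omega}|\nabla_{\mathcal{B}_5}d|^{2}H(x)|u|^{2}\,dx.
\end{equation*}

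The main technical point — and the only one that is not a pure template instantiation — is the appeal to the existence and homogeneity of the fundamental solution $\Phi$ of $\mathcal{L}_{\mathcal{B}_5}$, since on the Cartan group (unlike on $\mathbb{H}^{1}$ or for the Grushin operator) no closed form is available. This is not actually an obstacle: existence, positivity, and the correct homogeneity of $\Phi$ are standard consequences of Folland's theory for homogeneous hypoelliptic operators on stratified groups, as developed in \cite{BLU07}. Once $\Phi$ is in hand, the rest of the argument is precisely the ODE reduction from \eqref{eq-PDE} to \eqref{eq-ODE} and the Picone computation already carried out in the proof of Theorem \ref{main_thm}; no additional work is required.
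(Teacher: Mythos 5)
Your proposal is correct and follows essentially the same route as the paper, which states this corollary as an immediate instance of Theorem \ref{main_thm} with $p=2$, $A(x)=\sigma(x)\sigma(x)^{T}$ and $Q=10$; your verification that $|\nabla u|_{A}^{2}=|\nabla_{\mathcal{B}_5}u|^{2}$ and your appeal to the standard theory of fundamental solutions for sub-Laplacians on stratified groups supply exactly the details the paper leaves implicit. Your side remark that this $A(x)$ is only positive semi-definite (so the running ``uniformly positive definite'' hypothesis is not literally satisfied) is a fair observation about the paper's own formulation, and you correctly note that the Picone identity and the rest of the argument survive this degeneracy.
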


\section{Rellich inequality with Bessel pairs}\label{sec-Rel}
In this section, we present a Rellich inequlaity with Bessel pairs, which will be obtained as a byproduct of the (second-order) Picone type identity and the divergence theorem. 

\begin{thm}\label{thm_Rellich}
	Let $\Omega$ be a bounded domain in $\mathbb{R}^n$. Let $W \in C^{2}(\Omega)$ and $H \in L^1_{loc}(\Omega)$ be positive radially symmetric functions. Suppose that there exists a positive function $v\in C^{2}(\Omega)$ such that
	\begin{equation}\label{R-hyp_lp}
	\Delta (W(x) |\Delta v|^{p-2}\Delta v) \geq H(x) v^{p-1},
	\end{equation}
	with $-\Delta v >0$ a.e. in $\Omega$. Then for all complex-valued functions $ u \in C^{2}_0(\Omega)$, we have 
	\begin{equation}
	\int_{\Omega} W(x) |\Delta |u||^p dx \geq \int_{\Omega} H(x) |u|^p dx,
	\end{equation}
	where $1<p<n$.
\end{thm}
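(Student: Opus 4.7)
My plan is to prove Theorem~\ref{thm_Rellich} as a direct second-order counterpart of the argument used for the Hardy inequality: construct a pointwise Picone-type bound, multiply it by $W$, integrate it against the super-solution $v$, and then transfer the Laplacians via the divergence theorem to unleash hypothesis \eqref{R-hyp_lp}. The entire chain parallels the proof of Theorem~\ref{main_thm}, with $\nabla$ replaced by $\Delta$ and the first-order Picone identity of Lemma~\ref{lem_Picone} replaced by a second-order analogue.

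The key pointwise step is to establish, for any positive $v \in C^{2}(\Omega)$ with $-\Delta v > 0$ and any non-negative $\varphi \in C^{2}(\Omega)$ (later taken to be $|u|$), the inequality
\begin{equation*}
(-\Delta v)^{p-1}\bigl[-\Delta(\varphi^{p}/v^{p-1})\bigr] \leq \bigl|\Delta \varphi\bigr|^{p}.
\end{equation*}
This follows from expanding
\begin{equation*}
\Delta\!\left(\frac{\varphi^{p}}{v^{p-1}}\right) = \frac{p\varphi^{p-1}\Delta\varphi}{v^{p-1}} - \frac{(p-1)\varphi^{p}\Delta v}{v^{p}} + \frac{p(p-1)\varphi^{p-2}}{v^{p+1}}\bigl|v\nabla\varphi - \varphi\nabla v\bigr|^{2},
\end{equation*}
discarding the nonnegative last term, and applying Young's inequality $pXY \leq Y^{p} + (p-1)X^{p/(p-1)}$ to the leading cross-term with $X = (\varphi/v)^{p-1}(-\Delta v)^{(p-1)/p}$ and $Y = |\Delta\varphi|/(-\Delta v)^{(p-1)/p}$; the $(p-1)\varphi^{p}(-\Delta v)/v^{p}$ contributions then cancel cleanly.

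With the Picone bound in hand, I multiply by $W(x)>0$, use $(-\Delta v)^{p-1} = -|\Delta v|^{p-2}\Delta v$ (valid since $-\Delta v>0$ a.e.), and integrate over $\Omega$. Because $u\in C^{2}_{0}(\Omega)$ and $v>0$ on $\operatorname{supp} u \Subset \Omega$, the function $|u|^{p}/v^{p-1}$ is compactly supported, so integrating by parts twice produces
\begin{equation*}
\int_{\Omega} W\,|\Delta v|^{p-2}\Delta v\,\Delta\!\left(\frac{|u|^{p}}{v^{p-1}}\right)dx = \int_{\Omega} \Delta\bigl(W\,|\Delta v|^{p-2}\Delta v\bigr)\,\frac{|u|^{p}}{v^{p-1}}\,dx \geq \int_{\Omega} H(x)|u|^{p}\,dx,
\end{equation*}
where the last inequality uses \eqref{R-hyp_lp} together with the positivity of $|u|^{p}/v^{p-1}$. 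Chaining this with the integrated Picone bound delivers $\int_{\Omega} W\bigl|\Delta|u|\bigr|^{p}\,dx \geq \int_{\Omega} H|u|^{p}\,dx$, as required.

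The main obstacle I anticipate is regularity at the zero set of $u$: for complex-valued $u$ the function $|u|$ is merely Lipschitz there, so $\Delta|u|$ need not exist classically, and for $1<p<2$ neither does $\Delta(|u|^{p}/v^{p-1})$. I would dispatch this by applying the pointwise Picone identity to the smooth approximant $\varphi_{\varepsilon}:=\sqrt{|u|^{2}+\varepsilon^{2}}$ and sending $\varepsilon\downarrow 0$ via dominated convergence, using the bound $|\nabla\varphi_{\varepsilon}|\leq |\nabla u|$ already exploited in the proof of Lemma~\ref{lem_Picone}. An alternative is to localise to the open set $\{|u|>0\}$, where $|u|$ inherits the $C^{2}$ regularity of $u$, and to note that $|u|^{p}$ and its distributional Laplacian vanish on the complement. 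Neither modification disturbs the chain of inequalities above.
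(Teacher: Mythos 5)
Your argument is correct and is essentially the paper's own proof: your pointwise bound $(-\Delta v)^{p-1}\bigl[-\Delta(\varphi^{p}/v^{p-1})\bigr]\leq|\Delta\varphi|^{p}$ is exactly the statement $R_1(u,v)=L_1(u,v)\geq 0$ of Lemma~\ref{lem_Piceone-R}, obtained from the same expansion of $\Delta(|u|^p/v^{p-1})$ and the same application of Young's inequality, and the subsequent multiplication by $W$, double integration by parts, and invocation of \eqref{R-hyp_lp} reproduce the paper's proof of Theorem~\ref{thm_Rellich} verbatim. Your closing remark on the regularity of $|u|$ at its zero set (handled via $\varphi_{\varepsilon}=\sqrt{|u|^{2}+\varepsilon^{2}}$) is a genuine refinement of a point the paper passes over silently, but it does not change the route.
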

Here we present the corollary for $p=2$ to the above theorem:
\begin{cor}\label{cor_Rellich}
	Let $\Omega$ be a bounded domain in $\mathbb{R}^n$. Let $W \in C^{2}(\Omega)$ and $H \in L^1_{loc}(\Omega)$ be positive radially symmetric functions. Suppose that a positive function $v\in C^{\infty}(\Omega)$ satisfies
	\begin{equation}\label{R-hyp_l2}
	\Delta (W(x)\Delta v) \geq H(x) v,
	\end{equation}
	with $-\Delta v >0$ a.e. in $\Omega$. Then for all complex-valued functions $ u \in C^{2}_0(\Omega)$, we have 
	\begin{equation}\label{R_eq_l2}
	\int_{\Omega} W(x) |\Delta |u||^2 dx \geq \int_{\Omega} H(x) |u|^2 dx.
	\end{equation}
\end{cor}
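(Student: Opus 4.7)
The plan is to specialize Theorem \ref{thm_Rellich} at $p=2$, which in turn rests on a second-order Picone-type identity paired with integrating by parts twice against the supersolution hypothesis $\Delta(W\Delta v) \geq Hv$. The net effect is to move both Laplacians off $|u|^2/v$ and onto $W\Delta v$, at which point \eqref{R-hyp_l2} directly delivers the conclusion.

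First, for smooth positive functions $f$ and $v$ on $\Omega$ with $-\Delta v > 0$, I would establish the pointwise identity
\begin{equation*}
|\Delta f|^2 - \Delta v \cdot \Delta\!\left(\frac{f^2}{v}\right) = \left(\Delta f - \frac{f}{v}\Delta v\right)^2 + \frac{2(-\Delta v)}{v}\left|\nabla f - \frac{f}{v}\nabla v\right|^2.
\end{equation*}
Verification is a direct computation: expand $\Delta(f^2/v)$ by the product rule (collecting the terms $2v^{-1}|\nabla f|^2$, $-4fv^{-2}\nabla f\cdot\nabla v$, $2f^2 v^{-3}|\nabla v|^2$, $2fv^{-1}\Delta f$, and $-f^2 v^{-2}\Delta v$), multiply by $\Delta v$, and verify that the difference matches the declared sum of squares. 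The first summand on the right is a square, and the second is nonnegative precisely because $-\Delta v > 0$ and $v > 0$; hence
\begin{equation*}
|\Delta f|^2 \geq \Delta v \cdot \Delta\!\left(\frac{f^2}{v}\right) \quad \text{pointwise in } \Omega.
\end{equation*}
Setting $f = |u|$, multiplying by $W > 0$, integrating over $\Omega$, and then integrating by parts twice (boundary terms vanish since $u \in C^2_0(\Omega)$ makes $|u|^2/v$ compactly supported), one obtains
\begin{equation*}
\int_\Omega W\,|\Delta|u||^2\,dx \geq \int_\Omega W\Delta v \cdot \Delta\!\left(\frac{|u|^2}{v}\right) dx = \int_\Omega \Delta(W\Delta v)\, \frac{|u|^2}{v}\,dx \geq \int_\Omega H|u|^2\,dx,
\end{equation*}
where the last step uses hypothesis \eqref{R-hyp_l2} together with $|u|^2/v \geq 0$.

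The main obstacle is that $|u|$ is in general only Lipschitz, not $C^2$, when $u$ is complex-valued and can vanish, so the pointwise Picone identity above is not immediately applicable to $f = |u|$. I would handle this by the standard regularization $|u|_\epsilon := \sqrt{|u|^2 + \epsilon^2}$, which inherits $C^2$ regularity from $u$; the Picone identity and the integration by parts argument apply cleanly to $f=|u|_\epsilon$, after which one lets $\epsilon \downarrow 0$ and passes to the limit by dominated convergence (the dominating integrands come from $|\Delta u|$, $|\nabla u|^2/v$, and $|u|^2/v$, all in $L^1$ on the support of $u$). A subsidiary technical point is that $H \in L^1_{\mathrm{loc}}$ forces the hypothesis $\Delta(W\Delta v) \geq Hv$ to be read distributionally, but this causes no difficulty once it is tested against the nonnegative, compactly supported weight $|u|_\epsilon^2/v$.
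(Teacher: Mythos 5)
Your proof is correct and follows essentially the same route as the paper: your pointwise identity for $|\Delta f|^2 - \Delta v\cdot\Delta(f^2/v)$ is exactly the $p=2$ case of the paper's second-order Picone identity $R_1(u,v)=L_1(u,v)\geq 0$ (Lemma \ref{lem_Piceone-R}), and the subsequent multiplication by $W$, double integration by parts, and application of \eqref{R-hyp_l2} reproduce the proof of Theorem \ref{thm_Rellich}. Your additional regularization $|u|_\epsilon=\sqrt{|u|^2+\epsilon^2}$ addresses a regularity point that the paper passes over silently, but it does not change the substance of the argument.
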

In order to prove Theorem \ref{thm_Rellich}, we establish the (second-order) Picone type identity. 
\begin{lem}\label{lem_Piceone-R}
	Let $\Omega \subset \mathbb{R}^n$ be open set. Let $v$ be twice differentiable a.e. in $\Omega$ and satisfying the conditions $v >0$ and $-\Delta v >0$ a.e. in $\Omega$. Let a complex-valued function $u$ be twice differentiable a.e. in $\Omega$. For $p>1$ we define
	\begin{equation}
	R_1(u,v):= |\Delta |u||^p - \Delta\left( \frac{|u|^p}{v^{p-1}} \right) |\Delta v|^{p-2} \Delta v, 
	\end{equation}
	and 
	\begin{align}
	L_1(u,v):= &|\Delta |u||^p - p \left(\frac{|u|}{v}\right)^{p-1} \Delta |u| |\Delta v|^{p-2} \Delta v \\
	& + (p-1)\left( \frac{|u|}{v}\right)^p |\Delta v|^p  - p(p-1)\frac{|u|^{p-2}}{v^{p-1}} |\Delta v|^{p-2} \Delta v \left( \nabla |u| - \frac{|u|}{v} \nabla v\right)^2. \nonumber
	\end{align}
	Then we have 
	\begin{equation}
	L_1(u,v)=R_1(u,v) \geq 0.
	\end{equation}
\end{lem}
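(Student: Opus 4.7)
The plan is to mirror the strategy of the first-order Picone identity in Lemma~\ref{lem_Picone}: first establish the algebraic identity $R_1(u,v)=L_1(u,v)$ by a direct termwise expansion, and then deduce non-negativity of $L_1$ via Young's inequality, exploiting the sign hypothesis $-\Delta v>0$. The presence of second derivatives and the sign constraint replace the role of the symmetric matrix inequality that drove the first-order case.

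For the identity $R_1=L_1$, I would begin from
\begin{equation*}
\nabla\!\left(\frac{|u|^p}{v^{p-1}}\right) = \frac{p|u|^{p-1}}{v^{p-1}}\nabla|u| - \frac{(p-1)|u|^p}{v^p}\nabla v,
\end{equation*}
and take the divergence termwise with the product rule. The result is a sum of five monomials in $\Delta|u|$, $|\nabla|u||^2$, $\nabla|u|\cdot\nabla v$, $\Delta v$, and $|\nabla v|^2$ with coefficients involving powers of $|u|$ and $v$. Multiplying by $|\Delta v|^{p-2}\Delta v$ and subtracting from $|\Delta|u||^p$ expresses $R_1$ as an explicit six-term expression. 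On the other hand, expanding the perfect square $\bigl(\nabla|u|-(|u|/v)\nabla v\bigr)^2$ in the definition of $L_1$ and distributing reproduces exactly the same six monomials, so the identity follows by matching coefficients.

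For $L_1\geq 0$, I would use the hypothesis $-\Delta v>0$ to write $|\Delta v|^{p-2}\Delta v=-|\Delta v|^{p-1}$. With this substitution, the last term in $L_1$ becomes
\begin{equation*}
+\,p(p-1)\,\frac{|u|^{p-2}}{v^{p-1}}|\Delta v|^{p-1}\bigl(\nabla|u|-(|u|/v)\nabla v\bigr)^2,
\end{equation*}
which is manifestly non-negative. The remaining three terms reduce to
\begin{equation*}
|\Delta|u||^p + p\,\Delta|u|\,(|u|/v)^{p-1}|\Delta v|^{p-1} + (p-1)(|u|/v)^p|\Delta v|^p,
\end{equation*}
and Young's inequality $p\,XY\leq X^p+(p-1)Y^{p/(p-1)}$, applied with $X=|\Delta|u||$ and $Y=(|u|/v)|\Delta v|$, gives the lower bound $p\,\Delta|u|\,(|u|/v)^{p-1}|\Delta v|^{p-1}\geq -X^p-(p-1)Y^p$, which shows that this three-term sum is also non-negative. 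Adding both contributions completes the proof.

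The main obstacle will be the careful bookkeeping in the algebraic identity: several summands generated by the divergence must be collected and matched in the right way, in contrast to the cleaner first-order setting where only one integration by parts is involved. A secondary subtlety is that the factor $|u|^{p-2}$ is singular at $\{|u|=0\}$ when $1<p<2$; as in the first-order Picone argument, the identity should first be derived pointwise on $\{|u|>0\}$, the non-negativity of $L_1$ is then immediate there, and the extension across the zero set is handled by the same limiting considerations used for Lemma~\ref{lem_Picone}.
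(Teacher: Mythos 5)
Your proposal is correct and follows essentially the same route as the paper: the identity $R_1=L_1$ is obtained by the same termwise expansion of $\Delta\bigl(|u|^p/v^{p-1}\bigr)$, and the non-negativity comes from the same combination of Young's inequality on the cross term and the sign hypothesis $-\Delta v>0$ on the gradient-square term. Your additional remark about the singularity of $|u|^{p-2}$ on $\{|u|=0\}$ for $1<p<2$ is a reasonable refinement that the paper does not address explicitly, but it does not change the argument.
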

\begin{proof}[Proof of Lemma \ref{lem_Piceone-R}]
	We show that $R_1(u,v)=L_1(u,v)$ by a simple expansion of $R_1(u,v)$ as follows 
	\begin{align*}
	\Delta\left( \frac{|u|^p}{v^{p-1}} \right) &= \nabla \cdot \left( \frac{p|u|^{p-1}\nabla|u| }{v^{p-1}} - \frac{(p-1)|u|^p\nabla v}{v^p} \right)\\
	& = \sum_{i=1}^{n} \partial_{x_i} \left( \frac{p|u|^{p-1} \partial_{x_i}|u| }{v^{p-1}} - \frac{(p-1)|u|^p \partial_{x_i} v}{v^p} \right)\\
	& =  \sum_{i=1}^{n} \left[ \frac{ p(p-1)|u|^{p-2}( \partial_{x_i}|u|)^2 + p|u|^{p-1}\partial_{x_i}^2|u|  }{v^{p-1}} - \frac{p(p-1) |u|^{p-1} \partial_{x_i}|u|\partial_{x_i}v }{v^{p}} \right.\\
	& - \left.  \frac{p(p-1)|u|^{p-1}\partial_{x_i}|u|\partial_{x_i} v + (p-1)|u|^p \partial_{x_i}^2v }{v^p} +  \frac{p(p-1)|u|^p (\partial_{x_i} v)^2}{v^{p+1}} \right] \\
	& = p \frac{|u|^{p-1}}{v^{p-1}}\Delta |u| - (p-1)\frac{|u|^p}{v^p}\Delta v \\
	& + p(p-1)\left[ \frac{|u|^{p-2}}{v^{p-1}} |\nabla |u||^2 -2 \frac{|u|^{p-1}}{v^p}\langle \nabla |u|, \nabla v\rangle + \frac{|u|^{p}}{v^{p+1}} |\nabla v|^2 \right]\\ 
	& =  p \frac{|u|^{p-1}}{v^{p-1}}\Delta |u| - (p-1)\frac{|u|^p}{v^p}\Delta v  + p(p-1) \frac{|u|^{p-2}}{v^{p-1}} \left| \nabla |u| - \frac{|u|}{v} \nabla v \right|^2.
	\end{align*}
	The rest of proof is to apply Young's inequality, then we proceed as follows
	\begin{equation*}
	p	\frac{|u|^{p-1}}{v^{p-1}} \Delta |u| |\Delta v|^{p-2} \Delta v \leq |\Delta |u||^p + (p-1) \frac{|u|^p}{v^p}|\Delta v|^p,
	\end{equation*}
	where $p>1$. This gives
	\begin{align*}
	L_1(u,v)\geq - p(p-1)\frac{|u|^{p-2}}{v^{p-1}} |\Delta v|^{p-2} \Delta v \left( \nabla |u| - \frac{|u|}{v} \nabla v\right)^2.
	\end{align*}
	It is easy to see that $L_1(u,v)\geq 0$ by observing the fact $-\Delta v >0$.	
\end{proof}
\begin{proof}[Proof of Theorem \ref{thm_Rellich}]
	We prove by using the (second-order) Picone type identity and Green's second identity as follows:
	\begin{align*}
	0 &\leq \int_{\Omega} W(x) R_1(u,v) dx \\
	& = \int_{\Omega} W(x) |\Delta |u||^p dx - \int_{\Omega} W(x) \Delta \left(\frac{|u|^p}{v^{p-1}}\right) |\Delta v|^{p-2} \Delta v dx \\
	& = \int_{\Omega} W(x) |\Delta |u||^p dx -\int_{\Omega} \frac{|u|^p}{v^{p-1}} \Delta (W(x)|\Delta v|^{p-2} \Delta v  ) dx\\
	& = \int_{\Omega} W(x) |\Delta |u||^p dx -\int_{\Omega} H(x) |u|^p dx,
	\end{align*}
	using \eqref{R-hyp_lp}. This completes the proof.
\end{proof}
\subsection{Several versions of Rellich type inequalities}
Here by letting $W\equiv 1$ and $v=|x|^{-\frac{n-4}{2}}$ into \eqref{R-hyp_l2}, we obtain the function
\begin{equation*}
	H(x) = \frac{n^2(n-4)^2}{16}  |x|^{-4},
\end{equation*}
and inserting to inequality \eqref{R_eq_l2}, we have the following result:
\begin{cor}[Rellich inequality]
	Let $n\geq 5$. Then for all complex-valued functions $u  \in C_0^{\infty} (\mathbb{R}^n\backslash\{0\})$, we have 
	\begin{equation}
		\int_{\mathbb{R}^n} |\Delta |u||^2 dx \geq \frac{n^2(n-4)^2}{16} \int_{\mathbb{R}^n}  \frac{|u|^2}{|x|^4}dx.
	\end{equation}
\end{cor}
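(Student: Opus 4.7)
The plan is to specialize Corollary \ref{cor_Rellich} by choosing $W \equiv 1$ and the explicit trial function $v(x) = |x|^{-(n-4)/2}$, as indicated in the text preceding the corollary. Since a test function $u \in C_0^{\infty}(\mathbb{R}^n \setminus \{0\})$ has compact support avoiding the origin, I can work on a bounded open set $\Omega$ with $\overline{\mathrm{supp}\, u} \subset \Omega \subset \overline{\Omega} \subset \mathbb{R}^n \setminus \{0\}$, on which $v$ is smooth and positive, so the hypotheses of Corollary \ref{cor_Rellich} are satisfied.

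First I would verify the sign condition $-\Delta v > 0$. Using the standard identity $\Delta |x|^{\alpha} = \alpha(\alpha + n - 2)|x|^{\alpha - 2}$ with $\alpha = -(n-4)/2$, the factor $\alpha + n - 2 = n/2$ is positive while $\alpha = -(n-4)/2$ is negative for $n \geq 5$, so
\begin{equation*}
\Delta v = -\frac{n(n-4)}{4}\, |x|^{-n/2},
\end{equation*}
and hence $-\Delta v > 0$ on $\Omega$, as required.

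Next I would compute $\Delta(W \Delta v) = \Delta(\Delta v)$ by applying the same identity with $\alpha' = -n/2$, which gives $\alpha'(\alpha' + n - 2) = -\frac{n}{2} \cdot \frac{n-4}{2}$ and therefore
\begin{equation*}
\Delta(\Delta v) = \frac{n^2(n-4)^2}{16}\, |x|^{-n/2 - 2}.
\end{equation*}
Dividing by $v = |x|^{-(n-4)/2}$ identifies the weight
\begin{equation*}
H(x) = \frac{n^2(n-4)^2}{16}\, |x|^{-4},
\end{equation*}
so the hypothesis $\Delta(W \Delta v) \geq H(x) v$ of Corollary \ref{cor_Rellich} holds with equality.

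Plugging $W \equiv 1$ and this $H$ into the conclusion \eqref{R_eq_l2} of Corollary \ref{cor_Rellich} yields the stated Rellich inequality. No part of this is truly difficult; the only mild subtlety is the mismatch between the ambient domain $\mathbb{R}^n$ in the statement and the bounded-domain hypothesis of Corollary \ref{cor_Rellich}, but this is dispatched by the compact support of $u$ away from the origin, which permits us to apply the corollary on any bounded smooth $\Omega$ containing $\mathrm{supp}\, u$ and bounded away from $0$.
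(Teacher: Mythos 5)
Your proposal is correct and follows exactly the paper's route: take $W\equiv 1$, $v=|x|^{-(n-4)/2}$ in Corollary \ref{cor_Rellich}, verify $-\Delta v>0$ and $\Delta(\Delta v)=\frac{n^2(n-4)^2}{16}|x|^{-4}v$, and read off the inequality. Your extra remarks on the sign condition and on reducing to a bounded domain away from the origin are details the paper leaves implicit, but they do not change the argument.
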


\begin{cor}\label{cor_Rel_p}
	Let $n\geq 3$ and $2 - \frac{n}{p}<\gamma<\frac{n(p-1)}{p}$. Then for all complex-valued functions $u  \in C_0^{\infty} (\mathbb{R}^n\backslash\{0\})$, we have 
	\begin{equation}\label{eq-Mid}
		\int_{\mathbb{R}^n} |x|^{\gamma p} |\Delta u|^p dx \geq \left( \frac{n}{p} -2 + \gamma \right)^p \left( \frac{n(p-1)}{p} -\gamma\right)^p \int_{\mathbb{R}^n}|x|^{(\gamma -2)p} |u|^p dx.
	\end{equation}
	In the case $\gamma=0$ and for $1<p<n/2$, we get 
		\begin{equation}\label{eq-Okaz}
	\int_{\mathbb{R}^n}  |\Delta u|^p dx \geq \left( \frac{n}{p} -2 \right)^p \left( \frac{n(p-1)}{p}\right)^p \int_{\mathbb{R}^n}|x|^{-2p} |u|^p dx.
	\end{equation} 
\end{cor}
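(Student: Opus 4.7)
The plan is to deduce Corollary \ref{cor_Rel_p} directly from Theorem \ref{thm_Rellich} by producing an explicit pure-power supersolution. I will take the weight $W(x) = |x|^{\gamma p}$ so that the left-hand side of \eqref{R-hyp_lp} matches the one in \eqref{eq-Mid}, and I will use the ansatz $v(x) = |x|^{-\beta}$ with parameter $\beta > 0$ to be tuned. Since $u \in C_0^{\infty}(\mathbb{R}^n \setminus \{0\})$, the support of $u$ lies in some bounded annulus on which $v$ is smooth and strictly positive, so Theorem \ref{thm_Rellich} may legitimately be applied on such an annulus.

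Using the radial identity $\Delta(|x|^{\alpha}) = \alpha(\alpha+n-2)|x|^{\alpha-2}$ twice, I compute $\Delta v = -\beta(n-\beta-2)|x|^{-\beta-2}$, whence $|\Delta v|^{p-2}\Delta v = -[\beta(n-\beta-2)]^{p-1}|x|^{-(\beta+2)(p-1)}$, and, setting $\alpha := \gamma p - (\beta+2)(p-1)$,
\begin{equation*}
\Delta\bigl(W(x)|\Delta v|^{p-2}\Delta v\bigr) = -[\beta(n-\beta-2)]^{p-1}\,\alpha(\alpha+n-2)\,|x|^{\alpha-2}.
\end{equation*}
A routine check of exponents gives $\alpha - 2 + \beta(p-1) = (\gamma-2)p$, so dividing by $v^{p-1} = |x|^{-\beta(p-1)}$ identifies
\begin{equation*}
H(x) = -[\beta(n-\beta-2)]^{p-1}\,\alpha(\alpha+n-2)\,|x|^{(\gamma-2)p}.
\end{equation*}

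The decisive step is the optimal choice $\beta = \frac{n}{p} - 2 + \gamma$. Then $n - \beta - 2 = \frac{n(p-1)}{p} - \gamma$, and a brief algebraic check gives $\alpha = -\bigl(\frac{n(p-1)}{p} - \gamma\bigr)$ together with $\alpha + n - 2 = \frac{n}{p} - 2 + \gamma = \beta$, so that $-\alpha(\alpha+n-2) = \beta(n-\beta-2)$ and the prefactor of $H$ collapses to exactly $\bigl(\frac{n}{p} - 2 + \gamma\bigr)^p \bigl(\frac{n(p-1)}{p} - \gamma\bigr)^p$. The hypothesis $2 - n/p < \gamma < n(p-1)/p$ is precisely what guarantees $\beta > 0$ and $n-\beta-2 > 0$, hence $-\Delta v > 0$ and $H > 0$, so Theorem \ref{thm_Rellich} applies (with equality in \eqref{R-hyp_lp}) and yields \eqref{eq-Mid}. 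Setting $\gamma = 0$ collapses the admissibility condition to $p < n/2$, giving \eqref{eq-Okaz} as an immediate specialization.

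The main obstacle is really nothing more than identifying the correct exponent $\beta$; once the ansatz $v = |x|^{-\beta}$ is fixed, the argument reduces to matching exponents and verifying that the two quadratic polynomials $\beta\mapsto \beta(n-\beta-2)$ and $\alpha\mapsto -\alpha(\alpha+n-2)$ agree at the chosen values. A minor technical comment is that Theorem \ref{thm_Rellich} produces $|\Delta|u||^p$ on its left-hand side, which I replace by $|\Delta u|^p$ in \eqref{eq-Mid} via the standard reduction (splitting $u$ into real and imaginary parts and approximating near zeros of $|u|$); this causes no genuine difficulty since $u$ is compactly supported away from the origin.
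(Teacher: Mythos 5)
Your proof is correct and follows essentially the same route as the paper: both take $W=|x|^{\gamma p}$ and the power supersolution $v=|x|^{-\beta}$ with $\beta=\frac{n}{p}-2+\gamma$ (the paper's $\alpha=-\beta$), compute $\Delta\bigl(W|\Delta v|^{p-2}\Delta v\bigr)$ to read off $H$, and invoke Theorem \ref{thm_Rellich}. If anything, you are slightly more careful than the paper about the sign of $|\Delta v|^{p-2}\Delta v$, the positivity of $-\Delta v$ under the stated range of $\gamma$, and the passage from $|\Delta|u||$ to $|\Delta u|$, which the paper leaves implicit.
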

\begin{rem}
	Note that the weighted Rellich inequality \eqref{eq-Mid} is proved by Mitidieri \cite{Mitidieri00} and $L^p$-Rellich inequality \eqref{eq-Okaz} by Okazawa \cite{Okazawa} with the optimal constants, respectively.
\end{rem}
\begin{proof}[Proof of Corollary \ref{cor_Rel_p}]
	Let us set 
	\begin{equation}
		W = |x|^{\gamma p} ,\,\, \text{and} \,\,\, v = |x|^{\alpha}, 
	\end{equation}
	where $\alpha = - (n/p +a -2)$. A direct computation gives 
	\begin{align*}
		\Delta v &= \alpha(\alpha + n-2) |x|^{\alpha-2}, \\
		|\Delta v|^{p-2} & = |\alpha|^{p-2}(\alpha + n-2)^{p-2} |x|^{(\alpha-2)(p-2)},\\
		W|\Delta v|^{p-2} \Delta v & = |\alpha|^{p-1} (\alpha + n-2)^{p-1}|x|^{(\alpha-2)(p-1) + \gamma p}.
	\end{align*}
	By inserting to \eqref{R-hyp_lp}, we arrive at 
	\begin{align*}
		\Delta (W|\Delta v|^{p-2} \Delta v) =  C_{\alpha,p,n,\gamma} |x|^{\alpha(p-1)  +(\gamma-2)p },
	\end{align*}
	where 
	$$C_{\alpha,p,n,\gamma}:=|\alpha|^{p-1} (\alpha + n-2)^{p-1}(\alpha p -\alpha -2p +2 + \gamma p) (\alpha p -\alpha -2p + \gamma p +n).$$
	Now we put the value of $\alpha$ in the constant, then we get 
	\begin{equation}
		H(x) = \left( \frac{n}{p} -2 + \gamma \right)^p \left( \frac{n(p-1)}{p} -\gamma\right)^p |x|^{(\gamma -2)p}.
	\end{equation}
	The statement then follows from Theorem \ref{thm_Rellich}.
\end{proof}

\end{document}